\newcommand{\eqand}{\ensuremath{\quad \textrm{and} \quad}}
\newcommand{\foot}{\footnote}
\newcommand{\ssq}{\ensuremath{\subseteq}}
\newcommand{\smin}{\ensuremath{\setminus}}
\newcommand{\wh}{\ensuremath{\widehat}}
\newcommand{\kreis}{\ensuremath{\mathbb{T}^{1}}}
\newcommand{\torus}{\ensuremath{\mathbb{T}^2}}
\newcommand{\twomatrix}[4]{\ensuremath{\left(\begin{array}{cc} #1 & #2 \\ #3 &
      #4 \end{array}\right)}}
\newcommand{\alphlist}{\begin{list}{(\alph{enumi})}{\usecounter{enumi}\setlength{\parsep}{2pt}
      \setlength{\itemsep}{1pt} \setlength{\topsep}{5pt}
      \setlength{\partopsep}{3pt}}}
\newcommand{\arablist}{\begin{list}{(\arabic{enumi})}{\usecounter{enumi}\setlength{\parsep}{2pt}
          \setlength{\itemsep}{1pt} \setlength{\topsep}{5pt}
          \setlength{\partopsep}{3pt}}}
\newcommand{\romanlist}{\begin{list}{(\roman{enumi})}{\usecounter{enumi}\setlength{\parsep}{2pt}
              \setlength{\itemsep}{1pt} \setlength{\topsep}{5pt}
              \setlength{\partopsep}{3pt}}}
\newcommand{\Romanlist}{\begin{list}{(\Roman{enumi})}{\usecounter{enumi}\setlength{\parsep}{2pt}
              \setlength{\itemsep}{1pt} \setlength{\topsep}{5pt}
              \setlength{\partopsep}{3pt}}}
\newcommand{\bulletlist}{\begin{list}{$\bullet$}{\setlength{\parsep}{2pt}
                \setlength{\itemsep}{1pt} \setlength{\topsep}{5pt}
                \setlength{\partopsep}{3pt}\setlength{\leftmargin}{15pt}}} 
\newcommand{\Alphlist}{\begin{list}{(\Alph{enumi})}{\usecounter{enumi}\setlength{\parsep}{2pt}
      \setlength{\itemsep}{1pt} \setlength{\topsep}{5pt}
      \setlength{\partopsep}{3pt}}}
 \newcommand{\listend}{\end{list}}
\newcommand{\T}{\ensuremath{\mathbb{T}}}
\newcommand{\N}{\ensuremath{\mathbb{N}}} 
\newcommand{\R}{\ensuremath{\mathbb{R}}}
\newcommand{\Z}{\ensuremath{\mathbb{Z}}}
\newcommand{\Q}{\ensuremath{\mathbb{Q}}}
\newcommand{\A}{\ensuremath{\mathbb{A}}}
\newcommand{\cA}{\mathcal{A}}
\newcommand{\cC}{\mathcal{C}}
\newcommand{\cU}{\mathcal{U}}
\newtheoremstyle{tobthm}{3pt}{3pt}{\itshape}{0pt}{\bfseries}{.}{0.5eM}{}
\theoremstyle{tobthm}
\newtheorem{definition}{Definition}[section]
\newtheorem{thm}[definition]{Theorem}
\newtheorem{lem}[definition]{Lemma}
\newtheorem{cor}[definition]{Corollary}  
\newtheorem{prop}[definition]{Proposition}
\newtheoremstyle{tobrem}{3pt}{3pt}{\normalfont}{0pt}{\bfseries}{.}{0.5em}{}
\theoremstyle{tobrem}
\numberwithin{equation}{section}
\numberwithin{figure}{section}
\title{\large\textsc{Irrational rotation factors for conservative
    torus homeomorphisms}}
\author{T.~J\"ager\thanks{Institute of Mathematics, FSU Jena, Germany. Email:
    {\tt Tobias.Oertel-Jaeger@tu-dresden.de}}\ \ and
 F.~Tal\thanks{Universidade de S\~{a}o Paulo, Brasil. Email: {\tt fabiotal@ime.usp.br}}}
\begin{document}

\setlength{\abovedisplayskip}{1.0ex}
\setlength{\abovedisplayshortskip}{0.8ex}

\setlength{\belowdisplayskip}{1.0ex}
\setlength{\belowdisplayshortskip}{0.8ex}
\maketitle

\abstract{We provide an equivalent characterization
  for the existence of one-dimensional irrational rotation factors of
  conservative torus homeomorphisms that are not eventually annular.
  It states that an area-preserving non-annular torus homeomorphism
  $f$ is semiconjugate to an irrational rotation $R_\alpha$ of the
  circle if and only if there exists a well-defined speed of rotation
  in some rational direction on the torus, and the deviations from the
  constant rotation in this direction are uniformly bounded. By means
  of a counterexample, we also demonstrate that a similar
  characterization does not hold for eventually annular torus
  homeomorphisms.\smallskip

  % {\em Francais.} Nous d\'emontrons une caract\'erisation
  % \'equivalente de l'existence des facteurs de rotation
  % irrationelles des hom\'eomorphismes du tore pr\'eservant l'aire et sans
  % it\'er\'e annulaire. Un tel hom\'eomorphisme est semiconjug\'e \`a
  % une rotation irrationelle si et seulement si il existe un nombre de
  % rotation unique dans une direction rationelle du tore, et en plus
  % les d\'eviations de la rotation rigide sont born\'ees dans cette
  % direction. Par contre, nous donnons aussi un exemple montrant
  % qu'une caract\'erisation analogue n'est pas valide dans le cas avec
  % it\'er\'e annulaire. \smallskip

\noindent{\em 2010 MSC number. Primary: 37E30, 37E45, 47A35, 54H20.}  }

\noindent
\section{Introduction}

The question of irrational rotation factors, that is, the existence of
a semiconjugacy or conjugacy to an irrational rotation of the circle,
is a classical problem in dynamical systems theory. An equivalent
formulation is to ask for the existence of continuous eigenfunctions
of the associated Koopman operator with irrational eigenvalues.
Typically, this issue is addressed by using the powerful tools of
KAM-theory, which even yield smooth conjugacies (`smooth
linearization'). The price to pay for this, however, is the
requirement of strong assumptions concerning the smoothness of the
considered systems and the arithmetic properties of the involved
rotation numbers.  Moreover, this approach is mostly restricted to
systems which are small perturbations of the underlying rotations.

At the same time, the very first result relating non-linear dynamics
to irrational circle rotations is the celebrated Poincar\'e
Classification Theorem, which states that an orientation-preserving
homeomorphism of the circle is semiconjugate to an irrational rotation
if and only if its rotation number is irrational \cite{poincare:1885}.
It is remarkable that this statement draws strong conclusions from
purely topological assumptions, and no restrictions on the rotation
number other than its irrationality are needed. The fact that the
existence of full conjugacies cannot generally be expected in a
topological setting is well-known and demonstrated by classical
examples of Denjoy \cite{Denjoy1932Courbes}.

Even 130 years after Poincar\'e's contribution, similar results in
this direction are quite rare. In recent years, however, there has
been substantial progress on `topological linearization' in a number
of situations and system classes, including skew-products over
irrational rotations \cite{jaeger/stark:2006}, mathematical
quasicrystals \cite{Aliste2010QuasicrystalTranslations},
reparametrisations of irrational flows
\cite{AlisteJaeger2012AlmostPeriodicStructures}, area-preserving torus
homeomorphisms \cite{jaeger:2009b} and dynamics on circle-like
continua \cite{JaegerKoropecki2014DecomposableContinua}. Most
importantly, some general pattern and methods start to emerge. In
particular, a common element in most of the proofs is the
identification of a suitable dynamically defined partition of the
phase space carrying a circular order structure.

Our aim here is to push forward this line of research by providing a
more or less complete description of the situation concerning the
existence of one-dimensional irrational rotation factors of
area-preserving homeomorphisms of the two-torus. For the non-annular
case we provide an equivalent characterization in terms of rotational
behavior, whereas in the eventually annular case we show that an
analogous statement is not valid. This complements a previous result
in \cite{jaeger:2009b}, which treats the existence of two-dimensional
rotation factors. Here, we note that a periodic point free
area-preserving torus homeomorphism is annular if there exists an
essential (but not fully essential) invariant continuum or,
equivalently, an essential invariant open annulus. It is eventually
annular if it has an annular iterate. The general definitions are
given in Section~\ref{Preliminaries} below. In the eventually annular
case, the dynamics of the respective iterate of $f$ can be embedded in
a compact annulus. This situation is quite different from the
non-annular case,\foot{In the context of this note, we use the term
  {\em `non-annular'} is the sense of `not eventually annular'.} where
the dynamics truly involve the full topology of the torus. Annular
homeomorphisms have been extensively studied in own right (see, for
example,
\cite{franks/lecalvez:2003,beguin/crovisier/leroux:2004,jaeger:2010a}
and references therein) and significant information concerning their
dynamical behavior is available. However, as we exemplify in
Section~\ref{Counterexample}, the question of irrational rotation
factors is more intricate than in the non-annular case and does not
have a similar solution.

Given a homeomorphisms $f$ of the two-torus $\torus=\R^2/\Z^2$, we
denote its lift to $\R^2$ by $F$. We say $v=(p,q)\in\Z^2\smin\{0\}$ is
{\em reduced} if $\gcd(p,q)=1$ and call $w\in\Z^2$ {\em complementary}
if $\det(w,v)=1$. Suppose that for some reduced integer vector $v$ we
have $F(z+v)=F(z)+v$ for all $z\in\R^2$. Note that this property is
independent of the choice of the lift. Moreover, when $f$ is homotopic to the
identity, this holds for all reduced integer vectors. The {\em
  rotation interval of $F$ in the direction of $v$} is given by
\[
\rho_v(F)  \ = \ 
\bigcap_{n\in\N}\overline{\bigcup_{m\geq n} K(F,m)} \ ,
\]
where $K(F,m) = \{\langle F^m(z)-z,v\rangle/m \mid z\in\R^2\}$. It is always a
compact interval \cite{misiurewicz/ziemian:1989}. If $\rho_v(F)=\{\alpha\}$ for
some $\alpha\in\R$ and moreover there exists a constant $C>0$ such that
\[
\left|\langle F^n(z)-z,v\rangle - n\alpha\right| \ \leq C \
\]
for all $n\in\N$ and $z\in\R^2$, then we say $f$ has {\em bounded deviations} (from
the constant rotation) {\em in the direction of $v$}.  Denote the rotation by
$\alpha\in\R$ on $\kreis$ by $R_\alpha$. Then our main result reads as follows.
\begin{thm} \label{t.main} Suppose $f$ is a non-annular
  area-preserving homeomorphism of $\torus$.
  Then $f$ is semiconjugate to an irrational rotation $R_\alpha$ on
  $\kreis$ if and only if there exist a reduced integer vector $v$
  and a positive integer $k$ such that $\rho_v(F)=\{\alpha/k\|v\|^2\}$
  and $f$ has bounded deviations in the direction of $v$.
\end{thm}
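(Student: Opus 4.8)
I would prove the two implications separately; the ``only if'' direction is a short cohomological argument, while the ``if'' direction is the substantial one, where non-annularity and area-preservation are used essentially. Before starting either implication I would record that the hypotheses on each side force $f$ to be periodic point free (a periodic point maps to one of $R_\alpha$, respectively puts a rational number in $\rho_v(F)$) and homotopic to the identity, so that $F(z+w)=F(z)+w$ for all $w\in\Z^2$; this routine reduction lets me treat the displacement of $F$ as a $\Z^2$-periodic object.

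\textbf{Necessity.} Given a semiconjugacy $h\colon\torus\to\kreis$ with $h\circ f=R_\alpha\circ h$, lift $h$ and $f$ to $H\colon\R^2\to\R$ and $F$. Since $z\mapsto H(Fz)-H(z)$ and $z\mapsto H(z+w)-H(z)$ ($w\in\Z^2$) are continuous on the connected set $\R^2$ and $\Z$-valued up to a fixed additive constant, they must be constant: $H\circ F=H+\hat\alpha$ with $\hat\alpha\equiv\alpha\ (\mathrm{mod}\ 1)$, and $w\mapsto H(\cdot+w)-H(\cdot)=\langle w,\nu\rangle$ is a homomorphism into $\Z$, with $\nu\in\Z^2$; here $\nu\neq 0$ because $H(F^nz)=H(z)+n\hat\alpha$ is unbounded and $\hat\alpha$ is irrational. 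Writing $\nu=d\,v$ with $v$ reduced and $d=\gcd(\nu_1,\nu_2)$, and using that $z\mapsto H(z)-\langle z,\nu\rangle$ is $\Z^2$-periodic, hence bounded by some $M$, I obtain $\bigl|\langle F^nz-z,v\rangle-n\hat\alpha/d\bigr|\le 2M/d$ for all $n$ and $z$. This is bounded deviations in the direction of $v$ together with $\rho_v(F)=\{\hat\alpha/d\}$; taking $\hat\alpha$ as the representative of $\alpha$ and $k:=d\|v\|^2$ gives $\rho_v(F)=\{\hat\alpha/d\}=\{\alpha\|v\|^2/k\}$, as required.

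\textbf{Sufficiency.} Assume $v$ is reduced, $\rho_v(F)=\{\beta\}$ with $\beta$ irrational, $f$ has bounded deviations in the direction of $v$, and $\beta=\alpha\|v\|^2/k$. I would set $g(z):=\langle F(z)-z,v\rangle$, a continuous $\Z^2$-periodic function, and note that $\int g\,d\lambda=\beta$ for every $f$-invariant probability measure $\lambda$ (so $\int(g-\beta)\,d\Leb=0$), while bounded deviations says the Birkhoff sums $\sum_{i=0}^{n-1}(g-\beta)\circ f^i=\langle F^nz-z,v\rangle-n\beta$ are uniformly bounded, hence have uniformly vanishing averages. It suffices to construct a continuous $h\colon\torus\to\kreis$ with $h\circ f=R_{\beta/\|v\|^2}\circ h$: composing with the $k$-fold self-covering $p_k\colon\kreis\to\kreis$ and using $p_k\circ R_{\beta/\|v\|^2}=R_{k\beta/\|v\|^2}\circ p_k=R_\alpha\circ p_k$ then yields the semiconjugacy to the irrational rotation $R_\alpha$. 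To build $h$, I would follow the pattern of the quoted results and construct an $f$-invariant partition $\mathcal F$ of $\torus$ into compact connected sets carrying an $f$-invariant circular order, with $h$ the quotient onto $\torus/\mathcal F\cong\kreis$. On $\R^2$ the partition would be governed by the finite, equivariant ($H^\pm(z+w)=H^\pm(z)+\langle w,v\rangle$), sub-/super-invariant ($H^+\circ F\le H^++\beta$, $H^-\circ F\ge H^-+\beta$) but only semicontinuous functions
\[
H^+(z)=\sup_{n\ge0}\bigl(\langle F^nz,v\rangle-n\beta\bigr),\qquad H^-(z)=\inf_{n\le0}\bigl(\langle F^nz,v\rangle-n\beta\bigr),
\]
the elements of $\mathcal F$ being the components of the level sets of a monotone function trapped between $H^-$ and $H^+$, together with the complementary gaps. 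Area-preservation (Poincaré recurrence, together with the vanishing averages) would be used to show that $f$ acts on $\mathcal F$ with no wandering elements, so that, $\beta$ being irrational, the induced map on $\torus/\mathcal F$ is conjugate to an irrational rotation rather than a circle homeomorphism with wandering intervals; combined with $\int g\,d\lambda\equiv\beta$ this pins its rotation number to $\rho_v(F)/\|v\|^2=\beta/\|v\|^2$. Non-annularity would be used to show $\mathcal F$ is non-degenerate and $\torus/\mathcal F$ is genuinely a circle: otherwise, gathering the $\mathcal F$-elements over a closed invariant arc of the quotient would produce an invariant open annulus or an essential invariant continuum, contradicting the hypothesis; this also forces $\mathcal F$ to be the coarsest such partition, so that $h$ realises the finest circle factor.

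\textbf{The main obstacle.} The hard part is the ``if'' direction, and within it the passage from the merely semicontinuous sub/super-solutions $H^\pm$ to a bona fide continuous circle factor. Two points are delicate: exploiting recurrence to exclude Denjoy-type degeneracies of $f$ acting on $\mathcal F$ --- precisely what fails in the eventually annular counterexample of Section~\ref{Counterexample}, which has bounded deviations but no irrational rotation factor --- and, most importantly, using non-annularity to control the transverse topology, so that $\mathcal F$ has the order type of a circle and the induced rotation number is $\beta/\|v\|^2$. It is the breakdown of this last step for eventually annular maps, where the partition may collapse or the factor may turn out coarser than expected, that obstructs an analogous characterisation there.
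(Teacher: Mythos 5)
There is a genuine gap, and also one concretely false step. The false step is your opening reduction: the hypotheses do \emph{not} force $f$ to be homotopic to the identity. The map $f(x,y)=(x+\alpha,\,x+y)$ is homotopic to a Dehn twist, is semiconjugate to $R_\alpha$ by projection onto the first coordinate, and satisfies the right-hand side of the theorem with $v=(1,0)$; Corollary~\ref{c.homotopy_types} records that the admissible homotopy classes are exactly those conjugate to $\twomatrix{\pm 1}{m}{0}{1}$, so Dehn twists and orientation-reversing maps must be covered. Your necessity argument happens to survive (it only uses properties of $H$, never $F(z+w)=F(z)+w$), but your sufficiency sketch repeatedly invokes $\Z^2$-periodicity of the displacement and a leaf/partition picture that you only justify in the identity class; the correct device is to pass to the annulus $\R^2/\langle v^\perp\rangle$, to which $f$ descends because $f_*(v^\perp)=\pm v^\perp$.

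The more serious gap is that the sufficiency direction is a programme whose decisive step is missing. First, a case split is required: if some iterate of $f$ is a pseudo-rotation with uniformly bounded deviations, non-annularity forces its rotation vector to be totally irrational and the paper resolves this case by quoting the two-dimensional linearization theorem \cite[Theorem C]{jaeger:2009b}; your uniform construction does not address it, and the separation mechanism below is unavailable there. Second, in the remaining case the whole difficulty is to prove that the level sets of your monotone function --- equivalently, the circloids $C_r=\cC^+(A_r)$ with $A_r=\bigcup_{n\in\Z}\wh F^n(\{r-n\alpha\}\times\kreis)$ --- are pairwise disjoint, which is exactly the continuity of $h$. Poincar\'e recurrence plus vanishing Birkhoff averages cannot supply this: the counterexample of Section~\ref{Counterexample} is area-preserving, transitive, has uniformly bounded deviations and vanishing averages, and yet admits no irrational rotation factor, so only non-annularity can separate the level sets, and your proposed use of it (``an invariant open annulus or essential invariant continuum'') does not engage with the actual degeneracy, which is the collision of two \emph{non}-invariant circloids $C_r=C_s$. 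The paper's resolution is Proposition~\ref{p.essential_annulus}: for a strictly toral nonwandering $f$ no power of which is a bounded-deviation pseudo-rotation, every neighbourhood $U$ of a dynamically essential point has $U\cup f^n(U)$ essential for some $n$; its proof rests on the unbounded growth of diameters of fundamental domains (Lemma~\ref{lm:diameterunbounded}). A disk $\wh U$ placed in the gap $(C_r,C_s)$ then yields an essential open set trapped between and separating $C_{r'}$ and $C_{s'}$, which is what forces disjointness. You correctly identify this as ``the main obstacle,'' but nothing in your outline supplies the separation, so the continuity of $h$ --- hence the theorem --- remains unproved.
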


Suppose that $f$ is homotopic to the identity and there exist a vector
$\rho=(\alpha,\beta)\in\R^2$ and a constant $C>0$ such that
\[
|F^n(z)-z-n\rho| \ \leq \ C 
\]
for all $n\in\N$ and $z\in\R^2$. In this case, we say $f$ is a {\em
  pseudo-rotation}
% (has a unique rotation vector)
{\em with uniformly
bounded deviations}. If $\rho$ is totally irrational (i.e., if $\alpha, \beta$ and $\alpha/\beta$ are all irrational), then
Theorem~\ref{t.main} can be applied twice to obtain a semiconjugacy to
the irrational rotation by $\rho$ on the two-torus. This is precisely
the statement of Theorem C in \cite{jaeger:2009b}. However, we
actually use \cite[Theorem C]{jaeger:2009b} in the proof of
Theorem~\ref{t.main}, in order to treat exactly the above-mentioned
situation. The new ingredient we provide is a complemen\-tary argument
that addresses the cases where deviations are only bounded in one
direction, or where $f$ is homotopic to a Dehn twist. This part also
works under the weaker assumption of nonwandering dynamics and is
stated as Theorem~\ref{t.complementary_case}. The proof is based on
the concepts of strictly toral dynamics and dynamically essential
points developed in \cite{KoropeckiTal2012StrictlyToral}.

Section~\ref{Preliminaries} collects all the required preliminaries
and provides an elementary, but crucial lemma on strictly toral
dynamics. The proof of Theorem~\ref{t.main} is given in
Section~\ref{Homotopic_to_Id}, and in Section~\ref{Counterexample} we
provide a simple counterexample showing that the statement of
Theorem~\ref{t.main} is false in the eventually annular case.
\medskip

\noindent{\bf Acknowledgments.} This collaboration was carried out in
the framework of the Brazilian-European exchange program BREUDS. TJ
was supported by the German Research Council (Emmy-Noether grant OE
1721/2-1). FT was partially supported by CNPq grant 3004474/2011-8 and
Fapesp 2011/16265-8.

\section{Definitions and preliminaries} \label{Preliminaries}

\subsection{Circloids.}

Let $S$ be a two-dimensional manifold. A continuum $A\ssq S$ is called
{\em annular}, if it is the intersection of a nested sequence of
annuli $A_n$ such that each $A_{n+1}$ is essential in $A_n$ (not
contained in a topological disk $D\ssq A_n$.) An equivalent definition
is to require that $A$ has an annular neighborhood $\cA$ which it
separates into exactly two connected components, both of which are
again homeomorphic to the open annulus. We say $A$ is {\em essential}
if this annular neighborhood $\cA$ is essential in $S$.  We call an
annular continuum $C\ssq S$ with annular neighborhood $\cA$ a
{\em circloid}, if it does not contain a strictly smaller annular continuum
which is also essential in $\cA$. Note that $C$ may contain a
non-essential annular continuum as a subset, for example when it has
non-empty interior.

A compact set $A\ssq \A=\R\times\kreis$ is essential if
$\A\smin A$ has two unbounded connected components. In this case, we
denote the component which is unbounded to the right by $\cU^+(A)$ and
the one unbounded to the left by $\cU^-(A)$. Note that if $A\ssq\A$ is
an essential annular continuum, then $\A\smin C=\cU^-(C) \cup
\cU^+(C)$.  Further, we let $\cU^{+-}(A) = \cU^-(\partial\cU^+(A))$
and $\cU^{-+}(A)=\cU^+(\partial\cU^-(A))$, and define the same notions
for longer alternating sequences of the symbols $-$ and $+$ in the
analogous way.  This yields a simple procedure to obtain essential
circloids from arbitrary essential compact sets.
\begin{lem}[{\cite[Lemma
    3.2]{jaeger:2009b}}]\label{l.bounding_circloids}
  Suppose $A\ssq \A$ is compact and essential. Then
\[
\cC^+(A) \ = \ \A\smin (\cU^{+-}(A) \cup \cU^{+-+}(A)) \eqand \cC^-(A) \ = \
\A\smin (\cU^{-+}(A) \cup \cU^{-+-}(A))
\]
are both essential circloids.
\end{lem}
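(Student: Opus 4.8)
The plan is to prove the statement for $\cC^+(A)$; the case of $\cC^-(A)$ then follows by conjugating with the reflection $\iota\colon\A\to\A$, $(x,\theta)\mapsto(-x,\theta)$, which interchanges $\cU^+$ with $\cU^-$ and hence $\cC^+$ with $\cC^-$. Write $U_1=\cU^+(A)$, $U_2=\cU^{+-}(A)=\cU^-(\partial U_1)$ and $U_3=\cU^{+-+}(A)=\cU^+(\partial U_2)$, so $\cC^+(A)=\A\smin(U_2\cup U_3)$. The first step is an elementary observation about complementary components: if $B\ssq\A$ is compact and essential and $V$ is a component of $\A\smin B$ that is unbounded to the right (resp.\ to the left), then $V$ is the unique such component (a half-cylinder $\{x>M\}\times\kreis$ lies in a single component), $\partial V\ (\ssq B)$ is again compact and essential, and $V$ equals the right-unbounded (resp.\ left-unbounded) component of $\A\smin\partial V$ — for $V$ is open, connected, and contains $\partial V$ in its closure, so it is itself a component of $\A\smin\partial V$. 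Iterating this with $B=A$ and then $B=\partial U_1$ yields $\partial U_2\ssq\partial U_1\ssq A$, $\cU^-(\partial U_2)=U_2$, $\cU^+(\partial U_2)=U_3$, and $U_2,U_3$ distinct — hence disjoint — components of $\A\smin\partial U_2$. Consequently $\A\smin\partial U_2=U_2\sqcup U_3\sqcup W$, where $W$ is the union of the bounded components, and
\[
\cC^+(A)\ =\ \A\smin(U_2\cup U_3)\ =\ \partial U_2\cup W .
\]
In particular $\cC^+(A)$ is compact and essential, and $\A\smin\cC^+(A)=U_2\sqcup U_3$ has exactly two components.

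The topological heart of the proof is to show $W=\emptyset$ and that $\partial U_2$ is connected. For this I would pass to the two-sphere by adjoining one point $N$ at the right end of $\A$ and one point $S$ at the left end. Then $\widehat U_1:=U_1\cup\{N\}$ is an open connected subset of $S^2$, $Z:=\overline{\widehat U_1}$ is a continuum, $\widehat U_2:=U_2\cup\{S\}$ is a connected component of $S^2\smin Z$, and $\partial_{S^2}\widehat U_2=\partial U_2$. By the classical fact that every complementary component of a continuum in $S^2$ is simply connected with connected boundary, $\widehat U_2$ is simply connected and $\partial U_2$ is a continuum. To get $W=\emptyset$ one uses, in addition, that $\widehat U_1$ is open, connected and dense in $Z$, together with the elementary fact that an open subset of $Z$ containing a dense connected subset is connected; this gives that $Z\smin\partial U_2$ is connected, and one then checks that the remaining complementary components of $Z$ abut $Z\smin\partial U_2$, so that $S^2\smin\overline{\widehat U_2}$ is connected — equivalently $W=\emptyset$ and $\cC^+(A)=\partial U_2$ (which therefore has empty interior). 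Since a simply connected proper open subset of $S^2$ is homeomorphic to $\R^2$, the component $U_2=\widehat U_2\smin\{S\}$ is an open annulus, and a similar argument with $\overline{\widehat U_2}$ in place of $Z$ (so that $\widehat U_3:=U_3\cup\{N\}$ is a component of $S^2\smin\overline{\widehat U_2}$, and $\overline{\widehat U_2}$ is a continuum) shows $U_3$ is an open annulus. Hence $\A$ itself is an annular neighbourhood of $\cC^+(A)$ which it separates into the two open annuli $U_2$ and $U_3$, so $\cC^+(A)$ is an essential annular continuum. I expect this step — controlling the topology of the iterated complementary components in $S^2$, and in particular excluding the bounded components $W$ — to be the main obstacle.

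It remains to prove minimality. Since every annular continuum contained in $\cC^+(A)$ and essential in $\A$ is in particular an essential continuum, it suffices to show that any essential continuum $C'\ssq\cC^+(A)$ equals $\cC^+(A)$. First, $\partial U_2=\partial U_3$: one has $\partial U_3\ssq\partial U_2$ by the bookkeeping step, and if some $x\in\partial U_2$ lay outside $\overline{U_3}$ it would have a neighbourhood disjoint from $U_3$, hence — since $\A\smin\partial U_2=U_2\sqcup U_3$ — contained in $\overline{U_2}$; but $x\in\partial U_2\ssq\partial U_1\ssq\overline{U_1}$ forces that neighbourhood to meet $U_1$, contradicting $\overline{U_2}\cap U_1=\emptyset$ (which holds because $U_1$ and $U_2$ are distinct, hence disjoint, components of $\A\smin\partial U_1$, so that $\overline{U_2}\ssq\A\smin U_1$). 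Thus every point of $\cC^+(A)=\partial U_2=\partial U_3$ lies in $\overline{U_2}\cap\overline{U_3}$. Now let $C'\ssq\cC^+(A)$ be an essential continuum. The sets $U_2,U_3$ are connected, disjoint from $C'$, and reach the two ends of $\A$, so $U_2\ssq\cU^-(C')$ and $U_3\ssq\cU^+(C')$. If some $x\in\cC^+(A)$ were not in $C'$, then a connected neighbourhood of $x$ contained in the open set $\A\smin C'$ would meet both $U_2\ssq\cU^-(C')$ and $U_3\ssq\cU^+(C')$, forcing $\cU^-(C')=\cU^+(C')$, which is impossible since $C'$ is essential. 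Hence $\cC^+(A)\ssq C'$, so $C'=\cC^+(A)$, and $\cC^+(A)$ is an essential circloid.
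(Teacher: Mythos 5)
The paper does not actually prove this lemma --- it is quoted from \cite[Lemma 3.2]{jaeger:2009b} --- so your argument has to stand on its own, and it contains a genuine error: the claim that $W=\emptyset$, i.e.\ that $\A\smin\partial U_2$ has no bounded components and hence $\cC^+(A)=\partial U_2$ has empty interior, is false. A counterexample is the Lakes of Wada continuum $\Lambda\ssq S^2$, whose complement consists of three domains $W_1,W_2,W_3$, each with boundary equal to all of $\Lambda$. Embed $S^2\smin\{N,S\}$ as $\A$ with $N\in W_1$, $S\in W_2$, and take $A=\Lambda$. Then $U_1=\cU^+(A)=W_1\smin\{N\}$, $\partial U_1=\Lambda$, $U_2=W_2\smin\{S\}$, $\partial U_2=\Lambda$, $U_3=W_1\smin\{N\}$, and $\A\smin\partial U_2$ has the bounded component $W_3$, so $W=W_3\neq\emptyset$ and $\cC^+(A)=\Lambda\cup W_3$ has non-empty interior. (The paper explicitly warns that circloids may have non-empty interior; your proof would force them all to be nowhere dense.) The step that breaks is precisely the one you flag as ``one then checks'': in this example the complementary component $V=W_3$ of $Z=\overline{\widehat U_1}$ has $\partial V=\Lambda=\partial U_2$, so it does \emph{not} abut $Z\smin\partial U_2$, and $S^2\smin\overline{\widehat U_2}=W_1\sqcup W_3$ is disconnected.

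The error propagates into your minimality argument, which as written proves that $\cC^+(A)$ has no proper essential \emph{subcontinuum} at all; this is false ($\Lambda\subsetneq\Lambda\cup W_3$ is essential), and only the weaker statement required by the definition of circloid --- no proper essential \emph{annular} subcontinuum --- is true. The repair is to use annularity of a candidate $C'\ssq\cC^+(A)$ in an essential way: your argument correctly shows $\partial U_2\ssq C'$ (every point of $\partial U_2$ lies in $\overline{U_2}\cap\overline{U_3}$, and that part of your reasoning survives), but to absorb $W$ one must then argue that a point $y\in W\smin C'$ would lie in $\cU^+(C')$ or $\cU^-(C')$ --- because $\A\smin C'$ has \emph{exactly two} components when $C'$ is annular --- and these connected sets contain $U_3$, resp.\ $U_2$, yet lie in $\A\smin\partial U_2$, contradicting the fact that $y$ and $U_2\cup U_3$ lie in different components of $\A\smin\partial U_2$. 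The rest of your bookkeeping (uniqueness of the unbounded components, $\partial U_2$ a continuum, $U_2$ and $U_3$ simply connected complementary domains in $S^2$ and hence open annuli, connectedness and compactness of $\cC^+(A)$) is sound, but as it stands the proof establishes a false intermediate statement and therefore does not prove the lemma.
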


Given two essential circloids $C,C'\ssq\A$, we write $C\prec C'$ if
$C'\ssq \cU^+(C)$ and $C\preccurlyeq C'$ if $C'\ssq \cU^+(C)\cup C$.
Further, we write $[C,C'] = \A \smin (\cU^-(C)\cup \cU^+(C'))$ and
$(C,C')=\cU^+(C) \cap \cU^-(C')$. We will need the following
elementary observation.
\begin{lem} \label{l.space_between_circloids} If $C\preccurlyeq C'$ and $C\neq
  C'$, then $(C,C')$ is non-empty.
\end{lem}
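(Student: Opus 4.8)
The plan is to argue by contradiction: suppose $(C,C') = \cU^+(C)\cap\cU^-(C') = \emptyset$, and deduce that $C = C'$. One should keep in mind the trivial half of the underlying dichotomy, namely that $C'\ssq C$ is impossible unless $C'=C$, because a circloid contains no strictly smaller essential annular continuum; so the substance of the lemma is that $C'$ must protrude past $C$ into $\cU^+(C)$ with room left over on its left-hand side. The naive attempt --- choose $z\in C'\smin C\ssq\cU^+(C)$ and hunt for nearby points of $\cU^-(C')$ --- is not conclusive, because near $z$ the set $C'$ may have non-empty interior or may face only $\cU^+(C')$; the right object to work with is the frontier $\partial\cU^-(C')$, i.e.\ the part of $C'$ that actually faces $\cU^-(C')$.

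The key step is the claim $\partial\cU^-(C')\ssq C$. Since $C\preccurlyeq C'$ we have $C'\ssq\cU^+(C)\cup C$, so $C'$ --- and hence its subset $\partial\cU^-(C')$ --- is disjoint from $\cU^-(C)$. It is also disjoint from $\cU^+(C)$: a point $z$ in $\partial\cU^-(C')\cap\cU^+(C)$ would lie in the open set $\cU^+(C)$ while being a limit of points of $\cU^-(C')$, so $\cU^+(C)$ would contain a point of $\cU^-(C')$, contradicting $(C,C')=\emptyset$. Therefore $\partial\cU^-(C')\ssq\A\smin(\cU^-(C)\cup\cU^+(C)) = C$.

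To finish, I would feed this into the bounding-circloid construction of Lemma~\ref{l.bounding_circloids}. The essential circloid $\cC^-(C') = \A\smin(\cU^{-+}(C')\cup\cU^{-+-}(C'))$ is built only from $\partial\cU^-(C')$ and then from $\partial\cU^{-+}(C')$. Since $\partial\cU^-(C')\ssq C$ and the two complementary components of $C$ are $\cU^+(C)$, unbounded to the right, and $\cU^-(C)$, unbounded to the left, the right-unbounded component $\cU^{-+}(C')=\cU^+(\partial\cU^-(C'))$ contains $\cU^+(C)$; and since $\partial\cU^{-+}(C')\ssq\partial\cU^-(C')\ssq C$ --- using that the frontier of a complementary component always lies in the removed set --- the ensuing left-unbounded component $\cU^{-+-}(C')$ contains $\cU^-(C)$. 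Hence $\cC^-(C')\ssq\A\smin(\cU^+(C)\cup\cU^-(C)) = C$. Running the same construction with $C'$ in place of $C$, and using that $C'$, being an essential annular continuum, has no bounded complementary components, gives $\cC^-(C')\ssq\A\smin(\cU^+(C')\cup\cU^-(C')) = C'$. So the essential circloid $\cC^-(C')$, being in particular an essential annular continuum, is contained in both circloids $C$ and $C'$, and minimality of each forces $\cC^-(C') = C = C'$, the desired contradiction.

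I expect the genuine obstacle to be resisting the naive argument ``$C'\not\ssq C$, so $C'$ meets $\cU^+(C)$, hence $\cU^-(C')$ meets $\cU^+(C)$'', which is false once circloids are allowed to be fat; recognising that only the $-$-facing boundary $\partial\cU^-(C')$ is relevant, and that $\cC^-(\cdot)$ reconstructs a circloid from that boundary, is the crux. The remaining bookkeeping --- repeatedly invoking ``the frontier of a complementary component lies in the removed set'' to locate $\cU^+(C)$ and $\cU^-(C)$ among the components appearing in the definition of $\cC^-(C')$ --- is routine but is where care is needed.
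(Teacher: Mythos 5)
Your proof is correct, and it shares the paper's overall strategy: assume $(C,C')=\emptyset$, exhibit an essential annular continuum contained in both $C$ and $C'$, and conclude $C=C'$ from the minimality of circloids among essential annular continua. The difference is in how the intermediate continuum is produced. The paper takes $C^*=\A\smin(\cU^-(C')\cup\cU^+(C))$ directly, notes that the disjointness of $\cU^-(C')$ and $\cU^+(C)$ makes $C^*$ an essential annular continuum, and gets $C^*\ssq C\cap C'$ immediately from the monotonicity relations $\cU^-(C)\ssq\cU^-(C')$ and $\cU^+(C')\ssq\cU^+(C)$ --- no frontier analysis and no appeal to Lemma~\ref{l.bounding_circloids}. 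You instead route through the bounding-circloid operator: the claim $\partial\cU^-(C')\ssq C$ is correctly established (disjointness from $\cU^-(C)$ via $C\preccurlyeq C'$, disjointness from $\cU^+(C)$ via openness and $(C,C')=\emptyset$), and the subsequent component-chasing showing $\cC^-(C')\ssq C\cap C'$ is sound. What your version buys is that the assertion ``the intermediate set is an essential annular continuum'' comes prepackaged from Lemma~\ref{l.bounding_circloids}, whereas the paper asserts this for $C^*$ without justification; the price is the extra bookkeeping with frontiers. Either way the minimality endgame is identical.
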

\proof Suppose for a contradiction that $(C,C')$ is empty. Then, since
$\cU^-(C')\cap \cU^+(C)=\emptyset$, the set $C^*=\A\smin(\cU^-(C')\cup
\cU^+(C))$ is an essential annular continuum. However, $C\preccurlyeq C'$
implies that $\cU^-(C)\ssq \cU^-(C')$ and $\cU^+(C')\ssq \cU^+(C)$. Therefore
$C^*$ is contained both in $C$ and $C'$, and by minimality of circloids amongst
annular continua, with respect to inclusion, we obtain $C^*=C=C'$.  \qed\medskip

\subsection{Strictly toral dynamics.}

An open set $U\subset S$ is called {\it inessential} if every loop
whose image lies in $U$ is homotopically trivial in $S$, otherwise $U$
is {\it essential}. A general set $E\subset S$ is called inessential
if it has an inessential neighborhood, otherwise $E$ is said to be
essential. Note that for subcontinua of $\A$, this coincides with the
terminology used above. We identify $\R^2$ with the universal covering space of $\T^2$ and let $\pi:\R^2\to\T^2$ be the covering projection.  A set $E\ssq \T^2$ whose complement is
inessential is called {\it fully essential}.  An open set $U\ssq \T^2$
is called {\it annular} if it is neither inessential, nor fully
essential. If $U$ is connected, this is equivalent to saying that the union of $U$ with the
inessential components of its complement is a topological open
annulus.  If $U$ is open and $i:\Pi_1(U)\to \Pi_1(\T^2)$ is the
natural inclusion of its first homotopy group into the first
homotopy group of $\T^2$, then $U$ is inessential if and only if the
image of $i$ is trivial, $U$ is fully essential if and only if $i$ is
onto, and $U$ is annular if and only if the image of $i$ is
homomorphic to $\Z$.  Note that, if $U$ is a connected fully essential
set, then $\pi^{-1}(U)$ is connected, and that any two open fully
essential subsets of $\T^2$ must intersect.

Given a homeomorphism $f:\T^2\to\T^2$, we say that $x\in\T^2$ is an
{\it inessential point} for $f$ or that $x$ is a dynamically
inessential point if there exists a neighborhood $U$ of $x$ such that
$\bigcup_{i\in\Z}f^{i}(U)$ is inessential, otherwise $x$ is called a
dynamically essential point. The set of dynamically inessential points
is denoted by Ine($f$) and is open. Its complement is denoted by
Ess($f$). If $f$ is nonwandering, then $x$ is an inessential point if
and only if it is contained in some periodic open topological disk. By
Brouwer's theory, every orientation preserving map of the plane with nonwandering points has a fixed point. As an open topological disk is homeomorphic to the plane,  it follows that
\begin{lem}
If $f$ is nonwandering and Ine($f$) is not empty, then $f$ has a periodic point.
\end{lem}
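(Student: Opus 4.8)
\proof
The plan is to find, inside a periodic topological disk, a fixed point of the corresponding iterate of $f$ by means of the Brouwer-theory fact quoted above. Since $f$ is a nonwandering homeomorphism of the compact space $\T^2$, its recurrent points form a dense (in fact residual) set: the sets $R_k=\{x\mid d(f^m(x),x)<1/k \text{ for some }m\geq 1\}$ are open, they are dense because $f$ is nonwandering, and $\bigcap_k R_k$ is contained in the set of recurrent points. As $\mathrm{Ine}(f)$ is open and nonempty, we may therefore fix a recurrent point $x_0\in\mathrm{Ine}(f)$. By the characterization recalled above (applicable since $f$ is nonwandering), $x_0$ lies in a periodic open topological disk; we may take $D\ni x_0$ and $n\geq 1$ with $f^n(D)=D$ so that $D,f(D),\dots,f^{n-1}(D)$ are pairwise disjoint (such a $D$ arises as one disk in a periodic orbit of pairwise disjoint disks). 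Fix a homeomorphism $\varphi\colon D\to\R^2$, put $h=f^n|_D$, and set $g=\varphi\circ h\circ\varphi^{-1}$, a homeomorphism of $\R^2$ that is orientation-preserving whenever $f^n$ preserves orientation — in particular whenever $f$ is orientation-preserving, or $n$ is even.

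Next I would transport the recurrence of $x_0$ to $h$. Choose $m_j\to\infty$ with $f^{m_j}(x_0)\to x_0$. For $j$ large we have $f^{m_j}(x_0)\in D$, since $D$ is an open neighbourhood of $x_0$; on the other hand $f^{m_j}(x_0)\in f^{m_j}(D)=f^{r}(D)$, where $r\in\{0,\dots,n-1\}$ is $m_j$ modulo $n$, so disjointness of $D,f(D),\dots,f^{n-1}(D)$ forces $r=0$, i.e.\ $n\mid m_j$. Hence $h^{m_j/n}(x_0)\to x_0$ along $m_j/n\to\infty$, so $x_0$ is recurrent, in particular nonwandering, for $h$; equivalently $g$ has a nonwandering point, and $D\cong\R^2$. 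When $f$ preserves orientation or $n$ is even, $g$ is orientation-preserving and we are ready to apply Brouwer. In the remaining case ($f$ orientation-reversing, $n$ odd) one works instead with $h^2=f^{2n}|_D$, which is orientation-preserving: sorting the return times $m_j/n$ by parity shows that either $x_0$ is recurrent for $h^2$, or else $h^{-1}(x_0)\in D$ is a nonwandering point of $h^2$; in either case the conjugate of $h^2$ is an orientation-preserving homeomorphism of $\R^2$ with a nonwandering point.

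By the Brouwer-theory statement quoted above, such a homeomorphism has a fixed point; hence $h$ (respectively $h^2$) fixes some $p\in D$. Then $f^n(p)=p$ (resp.\ $f^{2n}(p)=p$), so $p$ is a periodic point of $f$.

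The step I expect to be the main obstacle is transporting the recurrence into $D$. Brouwer's theorem requires a nonwandering point of an iterate of $f$ lying in the \emph{open} disk $D$, whereas the hypothesis supplies only recurrence for $f$ itself; recurrence for $f$ does not in general descend to recurrence for $f^n$, and a priori the recurrence of $x_0$ might accumulate on $\partial D$ rather than in $D$. Pairwise disjointness of $D,f(D),\dots,f^{n-1}(D)$ is precisely what rules this out, by confining the $f$-orbit of $x_0$ to revisit $D$ only at times divisible by $n$; this is why one needs the periodic-disk characterization in this sharpened form, and not merely the existence of an $f$-invariant inessential open set through $x_0$. The orientation bookkeeping is routine.
\qed
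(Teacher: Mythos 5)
Your strategy is exactly the one the paper intends: the lemma is presented there as an immediate consequence of the two facts quoted just before it (an inessential point of a nonwandering map lies in a periodic open topological disk; an orientation-preserving plane homeomorphism with a nonwandering point has a fixed point), and you correctly identify that the only real work is producing a nonwandering point of the relevant iterate \emph{inside} the disk. The density of recurrent points, the reduction to a plane homeomorphism, and the orientation bookkeeping are all fine.

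The step that is not justified is the parenthetical claim that the periodic disk $D\ni x_0$ may be chosen so that $D,f(D),\dots,f^{n-1}(D)$ are pairwise disjoint. This does not follow from the statement you quote: even with $n$ minimal, distinct iterates of an open disk can overlap without coinciding (two overlapping round disks interchanged by a reflection form a periodic orbit of disks of minimal period $2$ with non-disjoint members). Since your entire transfer of recurrence from $f$ to $f^n|_D$ rests on this disjointness --- and you yourself single out that transfer as the crux --- this is a genuine gap as written. It is easily repaired, and in fact the disjointness is unnecessary: a recurrent point of $f$ is recurrent for every power $f^n$. (Given a neighborhood $U_0$ of $x_0$, nest neighborhoods $U_0\supseteq U_1\supseteq\cdots$ and pick $n_j\geq 1$ with $f^{n_j}(x_0)\in U_{j-1}$ and $f^{n_j}(U_j)\subseteq U_{j-1}$; then every sum $n_{i_1}+\cdots+n_{i_l}$ with $i_1<\cdots<i_l$ is a return time to $U_0$, and among the partial sums $n_1+\cdots+n_k$, $1\leq k\leq n+1$, two are congruent mod $n$, so their difference is a return time that is a positive multiple of $n$.) Since the $f^n$-orbit of $x_0$ stays in the $f^n$-invariant open set $D$, this immediately yields a recurrent, hence nonwandering, point of $f^n|_D$, and the rest of your argument (including the passage to $f^{2n}$ in the orientation-reversing case) goes through unchanged. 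Alternatively, disjointness can be extracted from the construction behind the quoted characterization, by taking the connected component through $x_0$ of an inessential set of the form $\bigcup_{i\in\Z}f^i(U)$, whose distinct components are automatically disjoint and which is periodic by nonwandering-ness; but one then still has to pass to a simply connected set, so the route via recurrence of powers is cleaner.
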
 

We say a homeomorphism $f$ of the two-torus is {\it annular} if there
exists $M>0$, an integer vector $v \in \Z^2$ and a lift $F$ of $f$
such that, for any $x \in \R^2$ and any $n\in\N,\, \vert \langle
F^n(x)-x, v\rangle \vert <M.$ It is {\em eventually annular} if it has
an annular iterate. Recall that we say $f$ is {\em non-annular} if it
is not eventually annular. As stated above, if $f$ is area-preserving
and has no periodic points, then $f$ is annular (eventually annular)
if and only if $f$ has an invariant (periodic) essential open annulus
\cite[Proposition 3.9]{jaeger:2009c}.  A homeomorphism $f:\T^2\to\T^2$
is said to be {\it strictly toral} or to have strictly toral dynamics
if, for all $n\in \N$, $f^n$ is not annular and Fix($f^n$) is an
inessential set. The following is immediate.
\begin{lem}
  If $f$ is nonwandering and $f^n$ non-annular for all $n\in\N$, then every
  essential periodic open set is fully essential.
\end{lem}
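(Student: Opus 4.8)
\proof
The plan is to argue by contradiction: from an essential, periodic, not fully essential open set I will manufacture an iterate of $f$ that is annular in the sense of the definition above, contradicting the hypothesis.

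So suppose $U\ssq\torus$ is open, essential, satisfies $f^n(U)=U$ for some $n\in\N$, and is not fully essential; by the dichotomy recalled above $U$ is then annular, and I write $g=f^n$. I would first reduce to a $g$-invariant essential open \emph{annulus}. Since $g$ permutes the countably many components of $U$ and preserves essentiality, it permutes the essential ones, and none of these can have an infinite $g$-orbit, because such a component would be a wandering open set, contradicting that $f$ is nonwandering. Hence some essential component $V$ of $U$ is $g$-periodic; passing to the annular hull of $V$ (the open annulus obtained by filling in its inessential complementary components), which is again $g$-periodic because $g$ preserves those components, I may assume $V$ is an essential open annulus, with core direction a primitive vector $w\in\Z^2$. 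Replacing $g$ by a suitable power $h=f^m$, still non-annular by hypothesis, I may further assume $h(V)=V$ and $h_*w=w$.

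Next I would pass to the cyclic cover of $\torus$ associated with $\Z w$. After conjugating $f$ by a linear automorphism of $\torus$ I may take this cover to be $\A=\R\times\kreis$, with covering projection $\hat p\colon\A\to\torus$, deck transformation $T(s,\theta)=(s+1,\theta)$, the $\kreis$-factor representing $w$, and the $\R$-coordinate $s$ corresponding to an integer vector $v_0$ complementary to $w$, so that $\langle z'-z,v_0\rangle$ equals the difference of the $s$-coordinates of the corresponding lifts to $\R^2$. Since $h_*w=w$, $h$ lifts to a homeomorphism $\bar h\colon\A\to\A$, which I choose so that it fixes a connected component $\widehat V_0$ of $\hat p^{-1}(V)$; replacing $h$ by $h^2$ if $\bar h$ exchanges the two ends of $\A$, I obtain $\bar h T=T\bar h$, so that $\bar h$ fixes every translate $\widehat V_j:=T^j\widehat V_0$. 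As $w$ is primitive (whence the core of $V$ winds exactly once), the sets $\widehat V_j$ are pairwise disjoint, and each is essential in $\A$ since it contains the lift of a core loop of $V$, which is an essential simple closed curve.

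The crux, and the step I expect to be the main obstacle, is to control the location of the $\widehat V_j$ in $\A$. Fix an essential simple closed curve $\gamma\ssq[a,b]\times\kreis$ contained in $\widehat V_0$. For $N$ large, $T^N\gamma$ lies wholly to the right of $\gamma$ and $T^{-N}\gamma$ wholly to the left; since $\widehat V_0$ is connected and disjoint from $T^{\pm N}\gamma\ssq\widehat V_{\pm N}$, it is confined to the strip between these curves, so $\widehat V_0\ssq[c,d]\times\kreis$ for some $c<d$ (which we may take with $d-c\ge 1$), and hence $\widehat V_j\ssq[c+j,d+j]\times\kreis$. Consequently the $\widehat V_j$ are linearly ordered in $\A$ from left to right, and the region $R_j$ lying strictly between $\widehat V_j$ and $\widehat V_{j+1}$ is $\bar h$-invariant (as $\bar h$ fixes both neighbours) and, containing neither end of $\A$, satisfies $R_j\ssq[c+j,d+j+1]\times\kreis$. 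The point-set details here are routine but fiddly; if one prefers, one replaces each $\closure(\widehat V_j)$ by the essential circloid $\cC^+(\closure(\widehat V_j))$ furnished by Lemma~\ref{l.bounding_circloids}, which is $\bar h$-invariant by the canonical nature of that construction, and applies Lemma~\ref{l.space_between_circloids} to the gaps. In any case $\A$ is the disjoint union of the sets $\widehat V_j$ and $R_j$, each of $s$-diameter at most $d-c+1$ and each $\bar h$-invariant, so the $\bar h$-orbit of any point stays in the single piece containing it; therefore $|s(\bar h^k(\hat z))-s(\hat z)|\le d-c+1$ for all $\hat z\in\A$ and $k\in\N$. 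Lifting this to $\R^2$, the associated lift $H$ of $h=f^m$ satisfies $|\langle H^k(z)-z,v_0\rangle|\le d-c+1$ for all $z\in\R^2$ and $k\in\N$, which means $f^m$ is annular — contradicting the hypothesis that no iterate of $f$ is annular. The reductions by powers and the linear change of coordinates are bookkeeping; all the genuine content lies in the topological argument inside the covering annulus.
\qed
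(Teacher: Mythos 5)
Your argument is correct, but it is worth saying how it relates to the paper: the authors give no proof at all (the lemma is declared ``immediate''), the implicit justification being the equivalence they have just recalled from \cite{jaeger:2009c} between $f$ having an annular iterate and $f$ having a periodic essential open annulus. Their intended one-line argument is: an essential, periodic, not fully essential open set is annular as a set, hence (after passing to a periodic essential component and filling in inessential complementary components) produces a periodic essential open annulus, hence an annular iterate --- contradiction. What you do is reprove the needed implication of that equivalence from scratch via the cyclic cover, and this is not wasted effort: the cited statement is formulated for area-preserving, periodic-point-free maps, whereas the direction you need (periodic essential open annulus $\Rightarrow$ some iterate has bounded displacement in the complementary direction) is purely topological, exactly as your confinement argument $\widehat V_0\ssq[c,d]\times\kreis$ shows. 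That confinement step, which is the real content, is proved correctly: disjointness of $\widehat V_0$ from the translated core curves $T^{\pm N}\gamma$ forces $\widehat V_0$ into a bounded vertical strip.

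Two soft spots should be tightened. First, a component with infinite $g$-orbit is wandering for $g=f^n$, not for $f$; you are therefore using that $f$ nonwandering implies $f^n$ nonwandering. This is true (the set of return times of any nonempty open set in a nonwandering system is closed under the sums occurring in the usual $V_k=V_{k-1}\cap f^{-m_k}(V_{k-1})$ construction, so by pigeonhole on partial sums it meets $n\Z$), but it is not a tautology and deserves a sentence. Second, the decomposition of $\A$ into the $\widehat V_j$ and the gaps $R_j$ needs the definition you only gesture at: take $R_j$ to be the set of points separated from the left end of $\A$ by $\widehat V_j$ and from the right end by $\widehat V_{j+1}$; this is manifestly $\bar h$-invariant once $\bar h$ fixes each $\widehat V_j$ and each end, and lies in a strip of width $d-c+1$ because it sits between $T^j\gamma$ and $T^{j+1}\gamma$. (Alternatively, the invariance of $\cU^+(\closure(\widehat V_0))$ alone, together with its sandwiching between $(d,\infty)\times\kreis$ and $(c,\infty)\times\kreis$, already yields the uniform bound on $|s(\bar h^k(\hat z))-s(\hat z)|$ without any global decomposition.) With these two points made explicit, the proof is complete.
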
  

An important piece in the understanding the behavior of essential
points is to understand the boundedness properties of the dynamics.
Let $f$ be a torus homeomorphism homotopic to the identity and $F$ a
lift of $f$. We say that $\rho$ is a {\em rotation vector} for $F$ if
there exists some $z$ in $\R^2$ such that
$\lim_{n\to\infty}\frac{F^n(z) -z}{n}= \rho$. If $F$ has a single
rotation vector $\rho$, we say that $f$ is a {\em pseudo-rotation}, in
which case for all $z$ in $\R^2$, $\lim_{n\to\infty}\frac{F^n(z)
  -z}{n}= \rho$.  

Call $D\subset\R^2$ a {\it fundamental domain} if the restriction of $\pi$ to $D$ is one-to-one and $\pi(D)=\T^2$. We have

\begin{lem}\label{lm:diameterunbounded}
  Suppose $f$ is a torus homeomorphism for which no power of $f$ is a
  pseudo-rotation with uniformly bounded deviations. Then for any
  fundamental domain $D\subset \R^2$, any lift $F$ of $f$, and any
  $K>0$, there exists $n>0$ such that $F^n(D)$ has diameter larger
  than $K$.
\end{lem}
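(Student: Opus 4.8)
The statement is essentially the contrapositive of an easy boundedness estimate. The plan is to show that if some fundamental domain $D$ has the property that $F^n(D)$ has uniformly bounded diameter for all $n$ (and all lifts $F$ of $f$), then some power of $f$ is a pseudo-rotation with uniformly bounded deviations. So first I would fix a lift $F$ and assume there is a constant $K_0>0$ with $\diam(F^n(D))\leq K_0$ for all $n\in\N$. Since every point $z\in\R^2$ differs from a point of $D$ by an integer vector and $F$ commutes with integer translations, $F^n(z)$ lies within $K_0$ (plus the diameter of $D$ itself, which is a fixed constant) of $F^n(\pi^{-1}(\pi(z))\cap D)$; more precisely, for any two points $z,z'\in\R^2$ with $\pi(z)=\pi(z')$, one has $z'=z+w$ for some $w\in\Z^2$ and hence $F^n(z')-F^n(z)=w$, which is a red herring — the real content is that $F^n$ maps the bounded set $D$ to a set of bounded diameter, so $F^n$ is, up to a bounded error, the translation $z\mapsto z+\rho_n$ where $\rho_n$ is (say) the image of a fixed basepoint. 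The key step is then to control the sequence $\rho_n = F^n(z_0)-z_0$.

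The main estimate is a quasi-additivity: using $F^{n+m}(z_0)-z_0 = (F^n(F^m(z_0))-F^m(z_0)) + (F^m(z_0)-z_0)$ and the fact that $F^m(z_0)$ lies within a bounded distance of a translate of $z_0$ by an integer vector (because $F^m(D)$ has bounded diameter and covers, mod $\Z^2$, all of $\T^2$, so $F^m(z_0)$ is within $K_0+\diam(D)$ of the point of $F^m(D)$ lying over $\pi(z_0)$, which is $F^m$ applied to a point of $D$ over $\pi(z_0)$)… the cleanest route is: let $w_m\in\Z^2$ be chosen so that $F^m(z_0) - w_m$ stays in a fixed bounded region; then $F^n(F^m(z_0)) = F^n(F^m(z_0)-w_m)+w_m$, and since $F^m(z_0)-w_m$ lies in a bounded set while $F^n$ distorts bounded sets by at most a bounded amount, $F^{n+m}(z_0)-z_0$ differs from $(F^n(z_0)-z_0)+(F^m(z_0)-z_0)$ by at most a constant $C_1$ independent of $n,m$. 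A standard subadditivity argument (Fekete-type, applied coordinate-wise to $|\rho_n|$ or more carefully to $\rho_n$ itself using the quasimorphism property) then shows $\rho_n/n$ converges to a limit $\rho$, and moreover $|\rho_n - n\rho|\leq C_2$ for a uniform constant $C_2$. Combined with the earlier observation that $F^n(z)-z$ is within a bounded distance of $\rho_n$ for every $z$, this gives $|F^n(z)-z-n\rho|\leq C$ for all $n$ and $z$, i.e.\ $f$ itself is a pseudo-rotation with uniformly bounded deviations.

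One subtlety: the lemma quantifies over \emph{all} lifts $F$, whereas a single lift determines $\rho$ only modulo $\Z^2$ and the argument above works with an arbitrary fixed lift; this is not a real obstruction, since changing the lift changes $\rho_n$ by $nw$ for a fixed $w\in\Z^2$, hence changes $\rho$ by $w$ and leaves the deviation bound intact. Also, the hypothesis of the lemma refers to powers of $f$ being pseudo-rotations, but if $f$ itself is a pseudo-rotation with uniformly bounded deviations so is every power, so proving the stronger conclusion (for $f$, not just some power) is harmless and in fact what the argument naturally gives. I would also remark that $f$ need not be assumed homotopic to the identity a priori, but the existence of a fundamental domain with $\diam F^n(D)$ bounded forces the homotopy class to be trivial: if $f$ were homotopic to a nontrivial linear map $A\in\mathrm{SL}(2,\Z)$, then $F^n$ would be within a bounded distance of $A^n$ (by the same bounded-distortion reasoning), and $\|A^n\|$ is unbounded unless $A$ is unipotent or finite order; handling the unipotent (Dehn twist) case requires noting that then $\langle F^n(z)-z, v\rangle$ is still unbounded in the expanding direction, again contradicting bounded diameter of $F^n(D)$.

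The step I expect to be the main obstacle is making the quasi-additivity estimate fully rigorous with uniform constants — in particular, pinning down the constant $C_1$ bounding $\big|(F^{n+m}(z_0)-z_0) - (F^n(z_0)-z_0) - (F^m(z_0)-z_0)\big|$ in terms of $K_0$ and $\diam(D)$ alone, uniformly in $n$ and $m$. This is where one must carefully use that $F^n$ sends a bounded set (of size controlled by $K_0$ and $\diam(D)$) to a set of diameter at most $K_0$, which requires the boundedness hypothesis to hold not just for $D$ but — via the $\Z^2$-equivariance — for every integer translate of $D$, and hence for the bounded region in which the points $F^m(z_0)-w_m$ are confined. Once that uniform quasimorphism bound is in hand, the convergence of $\rho_n/n$ and the uniform deviation bound follow from the standard fact that a quasimorphism on $\N$ is a bounded distance from its homogenization.
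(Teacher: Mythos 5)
Your proposal is correct, but it reaches the key estimate by a genuinely different route than the paper. Both proofs split off the case where the induced action $f_*=A\in\mathrm{GL}(2,\Z)$ has infinite order, handling it via a lifted closed curve whose endpoints are driven apart by $A^n$ (your remark at the end is exactly the paper's first case, and the connectedness-plus-covering argument you gesture at is what makes ``$\|A^nv\|\to\infty$'' imply $\diam F^n(D)\to\infty$). The divergence is in the remaining case, where one may assume the relevant power $g=f^{\overline n}$ is homotopic to the identity. You prove from scratch that $\rho_n=G^n(z_0)-z_0$ is a quasimorphism with constant controlled by $K_0$ and $\diam D$, and then invoke homogenization to produce the rotation vector $\rho$ together with the bound $|\rho_n-n\rho|\le C$; this is self-contained and does not presuppose that a rotation vector exists. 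The paper instead takes the existence of a rotation vector $\rho$ for $G$ as given (nonemptiness of the rotation set) and argues by contradiction: if the deviation exceeds $2K$ at some $(z_0,n_0)$, then the deviation in the direction $w$ of that excess is positive on all of $D$, hence (by periodicity and continuity) bounded below by some $a>0$ on $\R^2$, and the exact cocycle identity $D_w(n_1+n_2,z)=D_w(n_1,z)+D_w(n_2,G^{n_1}(z))$ makes it grow linearly along multiples of $n_0$, contradicting $\rho$ being a rotation vector. Your approach buys independence from the Misiurewicz--Ziemian input and produces the limit $\rho$ constructively; the paper's buys a shorter argument with an explicit $2K$ bound. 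The distortion estimate you flag as the main obstacle is indeed the crux, and your sketch of it is sound: for $z,z'$ in a bounded set, write $z\in D+a$, $z'\in D+a'$ and use $F^n(D+a)=F^n(D)+a$ to get $|F^n(z)-F^n(z')|\le K_0+|a-a'|$, which yields a uniform quasimorphism constant.

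One small correction to your write-up: bounded diameters of $F^n(D)$ do \emph{not} force the homotopy class of $f$ to be trivial --- finite-order classes such as $-\mathrm{Id}$ survive the $\|A^n\|$ argument. In that case you must pass to the power $g=f^{\overline n}$ homotopic to the identity and conclude that \emph{this power} is a pseudo-rotation with uniformly bounded deviations; this is precisely why the hypothesis of the lemma is phrased in terms of powers of $f$, rather than being a ``harmless'' strengthening as your aside suggests. With that adjustment the argument is complete.
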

\begin{proof}
  Fix a lift $F$ of $f$, and first consider the case where $f$ is not
  homotopic to a periodic homeomorphism. Then there exists some simple
  closed curve $\alpha:[0,1]\to \T^2$ such that, if $\widetilde \alpha$ is
  a lift of $\alpha$ to $\R^2$, then $\lim_{n\to\infty}\Vert
  F^n(\widetilde \alpha(0))-F^n(\widetilde \alpha(1))\Vert= \infty$, and so
  for any fundamental domain $D$ the diameter of $F^n(D)$ is not
  uniformly bounded.

 Now consider the case where $f$ is homotopic to a periodic  homeomorphism, in which case there exists some $\overline n$ such that $g =
  f^{\overline n}$ is homotopic to the identity, and let $G=F^{\overline n}$. Let $\rho$ be a rotation 
vector of $F$, and assume that there exists some fundamental domain $D\subset \R^2$ and some $K>0$
such that $G^n(D)$ has diameter smaller than or equal to $K$ for all $n>0$. We will show that, for all $z\in\R^2$ and all $n\in\N$, $\Vert G^n(z)-z-n\rho\Vert\le 2K$, which implies the lemma.

Assume, for a contradiction that this fails for some $z_0$ and $n_0$. We can assume that $z_0\in D$. Let $w= \frac{G^{n_0}(z)-z-{n_0}\rho}{\Vert G^{n_0}(z)-z-{n_0}\rho\Vert}$ and let $D_w(n, z) = \langle G^n( z)- z  - n\rho, w\rangle$. If $z\in D$, then by the contradiction hypothesis $\Vert G^{n_0}(z)-G^{n_0}(z_0)\Vert<K$ and $\Vert z- z_o\Vert<K$, and therefore $\vert D_w(n_0, z)-D_w(n_0, z_0)\vert< 2K$ which implies that $D_w(n_0, z)>0$ for all $z\in D$ and also for all $z\in \R^2$ as $D$ is a fundamental domain. Since the function $D_w(n_0,z)$ lifts a continuous function on the torus, there exists $a>0$ the minimum value of $D_w(n_0, z)$. As $D_w(n_1+ n_2,z)= D_w(n_1, z)+ D_w(n_2, G^{n_1}(z))$, we have that for all $k$ and all $z$, $D_w(kn_0, z)\ge ka$ and so 
$\lim_{n\to\infty}\Vert\frac{G^n(z) -z-n\rho}{n}\Vert$ cannot be null, which contradicts $\rho$ being a rotation vector for $G$.
\end{proof}

The following proposition is the main new result of this subsection.
Its proof is inspired by those of Theorem 8 and Proposition 9 of
\cite{GuelmanKoropeckiTal2014transitivity}.

%\begin{lem} 
%  Suppose $f\in\homtwo$ is nonwandering and has strictly toral
%  dynamics. Further, assume that $f$ is not a pseudo-rotation with
%  bounded deviations. Then for any$L>0$ and any neighbourhood $U$ of
%  an essential
%  point $x$ there exists $n\in\N$ such that, if $\widetilde U$ is a
%  connected component of $\pi^{-1}(U)$, then the diameter of
%  $F^n(\widetilde U)$ is larger than $L$
% \end{lem}

\begin{prop} \label{p.essential_annulus} Suppose $f$ is a strictly
  toral nonwandering torus homeomorphism and that no power of $f$ is
  a pseudo-rotation with bounded deviations. Then for any
  neighbourhood $U$ of an essential point $x$ there exists $n\in\N$
  such that $U\cup f^n(U)$ is essential.
\end{prop}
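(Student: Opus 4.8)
The plan is to argue by contradiction: suppose there is a neighbourhood $U$ of an essential point $x$ such that $U\cup f^n(U)$ is inessential for every $n\in\N$. Since $f$ is strictly toral and nonwandering, by the lemmas in Section~\ref{Preliminaries} every essential periodic open set is fully essential, and $\mathrm{Ess}(f)$ is a nonempty closed set carrying the essential dynamics. The first step is to understand the orbit $\bigcup_{n\geq 0} f^n(U)$: because $x$ is a dynamically essential point, this forward orbit must be essential (otherwise, after passing to a neighbourhood, $x$ would be inessential — here one uses nonwandering so that forward essential implies full-orbit essential). So $V:=\bigcup_{n\in\Z} f^n(U)$ is an essential $f$-invariant open set, hence fully essential. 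The contradiction hypothesis says that consecutive pieces $f^n(U), f^{n+1}(U)$ never combine to something essential; I would iterate this to show that any \emph{finite} union $\bigcup_{j=0}^{N} f^{n+j}(U)$ stays inessential — this is the combinatorial heart of the argument, and it rests on the elementary fact (the ``crucial lemma on strictly toral dynamics'' promised in Section~\ref{Preliminaries}) that a connected union of two inessential open sets whose union with some further inessential set is inessential behaves well; more precisely, that inessential open sets cannot be concatenated into an essential chain unless two non-consecutive ones already overlap essentially.

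The second step is to produce the growth that forces an essential overlap. Since no power of $f$ is a pseudo-rotation with uniformly bounded deviations, Lemma~\ref{lm:diameterunbounded} gives, for any fundamental domain $D$ and any $K>0$, some $m$ with $\diam(F^m(D)) > K$. Taking $U$ small, a lift $\widetilde U$ of $U$ sits in a bounded region; the lifted pieces $F^n(\widetilde U)$ then spread out, and in particular there is $N$ with $F^N(\widetilde U)$ intersecting two different integer translates of itself that are ``far apart'' in the same rational direction. Projecting down, this means that for a suitable $n$ the set $f^n(U)$ already wraps essentially around the torus in some direction on its own — but an essential open set invariant under no homotopy constraint forces, after combining with the invariant fully essential $V$, a contradiction with the chain being inessential. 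The cleanest way to package this: pass to the annular cover $\A=\R\times\kreis$ associated to the direction in which deviations are unbounded, lift $U$ to a bounded open set $\widetilde U\subset\A$, and use Lemma~\ref{l.bounding_circloids} to bound the orbit pieces by circloids $\cC^\pm$; unbounded deviations force the bounding circloids of $f^n(\widetilde U)$ to drift unboundedly to one side while staying within bounded distance of their predecessors (because $U\cup f^n(U)$ inessential forces $f(\widetilde U)$ and $\widetilde U$ to be close in the $\A$-coordinate). This drift, iterated, makes some $f^{n_1}(\widetilde U)$ and $f^{n_2}(\widetilde U)$ with $n_1<n_2$ overlap an integer translate of each other, which projects to $f^{n_2-n_1}(U)\cup U$ being essential after all.

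The main obstacle is the bookkeeping in the annular cover: one must convert the hypothesis ``$U\cup f^n(U)$ inessential for all $n$'' into a \emph{uniform} bound on how far the bounding circloids of consecutive orbit pieces can separate, and simultaneously extract from Lemma~\ref{lm:diameterunbounded} an \emph{unbounded} separation of non-consecutive ones; these two facts are in tension, and making them both quantitative in the same metric is where care is needed. A subtlety is that Lemma~\ref{lm:diameterunbounded} only gives unbounded diameter of $F^m(D)$, not of $F^m(\widetilde U)$ for a small fixed $U$ — so one first enlarges $U$ to contain a fundamental domain's worth of the essential dynamics (using that $x\in\mathrm{Ess}(f)$ and replacing $U$ by a larger neighbourhood is harmless, as essentiality of $U\cup f^n(U)$ only gets easier), or alternatively one works with the full orbit of $U$, which already covers a fully essential set, and applies the diameter lemma there. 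I expect that with the auxiliary strictly-toral lemma of Section~\ref{Preliminaries} in hand, the chain argument collapses to a short clean contradiction; without it, the concatenation step is the one that needs the most work.
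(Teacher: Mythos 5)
Your high-level skeleton (the full orbit of $U$ is fully essential; the diameter-growth Lemma~\ref{lm:diameterunbounded} forces some lifted iterate of $\widetilde U$ to spread out; this produces an essential union $U\cup f^{n}(U)$) does match the paper's strategy, but the two steps you yourself flag as ``where the work is'' are precisely where the proof lives, and your substitutes for them do not work. First, your ``combinatorial heart'' is false: pairwise inessentiality of the unions $f^m(U)\cup f^n(U)$ does \emph{not} imply that finite unions $\bigcup_{j}f^{n_j}(U)$ are inessential --- a chain of small disks, each consecutive pair of which has inessential union, can cover an essential loop. The paper never needs such a propagation and argues in the opposite direction. Second, enlarging $U$ is \emph{not} harmless: the proposition asserts essentiality of $U\cup f^n(U)$ for the given $U$, and proving it for a larger $U'$ is strictly weaker (the paper shrinks $U$, which is the harmless direction). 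Third, the claim that ``$U\cup f^n(U)$ inessential forces $f^n(\widetilde U)$ and $\widetilde U$ to be close in the $\A$-coordinate'' is wrong --- two far-apart small disks have inessential union --- so the circloid/drift packaging in the annular cover collapses. Finally, your closing mechanism (``$F^{n_2}(\widetilde U)$ overlaps an integer translate of $F^{n_1}(\widetilde U)$'') only yields $f^{n_2-n_1}(U)\cap U\neq\emptyset$; for essentiality you need one connected lifted piece to meet \emph{two distinct} integer translates of another, so that a component of $\pi^{-1}(f^{n'}(U)\cup U)$ fails to project injectively.

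The missing idea is the covering-plus-pigeonhole argument that bridges the gap between ``$\diam F^n(D)$ is unbounded'' and ``some single iterate of $\widetilde U$ has large diameter and meets two translates of the same earlier piece.'' Since $O=\bigcup_{i\in\Z}f^i(U)$ is fully essential, it contains two simple closed curves generating $\Pi_1(\T^2)$; one covers the boundary of an enlarged fundamental domain $D'$ (of diameter $<R$) by finitely many sets $F^{i_j}(\widetilde U)+v_j$, $1\le j\le k$, whose union is connected. Applying $F^n$ with $\diam F^n(D)>k(k+1)R$ and pigeonholing among the $k$ pieces gives a single $F^{i_{j_0}+n}(\widetilde U)+v_{j_0}$ of diameter $>(k+1)R$; it therefore crosses at least $k+1$ translates of $\partial D'$, and a second pigeonhole over the $k$ covering sets of each translate produces one index $j_1$ and two distinct vectors $w_{l_1}\neq w_{l_2}$ with $F^{i_{j_0}+n}(\widetilde U)+v_{j_0}$ meeting both $F^{i_{j_1}}(\widetilde U)+v_{j_1}+w_{l_1}$ and $F^{i_{j_1}}(\widetilde U)+v_{j_1}+w_{l_2}$. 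Translating back, $F^{i_{j_0}+n-i_{j_1}}(\widetilde U)$ meets two distinct translates of $\widetilde U$, which is exactly the essentiality criterion. Without this double pigeonhole your argument does not close.
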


\begin{proof}
  By maybe taking a subneighborhood, we can assume that $U$ is
  inessential and contained in $B_{1/2}(x)$. Let us fix a connected
  component $\widetilde U$ of $\pi^{-1}(U)$.  Since $f$ is strictly
  toral, every invariant or periodic essential set is fully essential
  and invariant, and since $x$ is an essential point, this implies
  that $O = \bigcup_{i\in\Z} f^i(U)$ is fully essential, and $O$ is trivially invariant.
  Therefore there exist simple closed curves $\alpha_1, \alpha_2:
  \kreis \to \T^2$ whose images lie in $O$ and which generate the
  fundamental group of $\T^2$. This implies that the connected
  components of the complement of $\pi^{-1}(\alpha_1\cup\alpha_2)$ are
  uniformly bounded.  Note also that for each point $z$ in
  $\pi^{-1}(\alpha_1\cup\alpha_2)$, there exists some integer $i_{z}$
  and some $v_{z}\in\Z^2$ such that $z\in F^{i_{z}}(\widetilde U)+
  v_{z}$.

  Let $D\subset \R^2$ be a fundamental domain, and consider the
  connected set $D'$ which is the union of $D$ with all connected
  components of the complement of $\pi^{-1}(\alpha_1\cup\alpha_2)$
  intersecting $D$. Then $D'$ is a bounded set, and its boundary is
  contained in $\pi^{-1}(\alpha_1\cup\alpha_2)$. Let $R$ be larger
  than the diameter of $D'$. By compactness, one can find integers
  $i_j, 1\le j \le k$, and integer vectors $v_j, 1\le j \le k$ such
  that $\partial D'\subset \bigcup_{j=1}^{k} F^{i_j}(\widetilde
  U)+v_j$, and $\bigcup_{j=1}^{k} F^{i_j}(\widetilde U)+v_j$ is
  connected.  Since no power of $f$ is a pseudo-rotation with
  uniformly bounded deviations, by Lemma \ref{lm:diameterunbounded}
  there exists some $n>0$ such that the diameter of $F^n(D)$ is larger
  than $k(k+1)R$. This implies that the diameter of
  $\bigcup_{j=1}^{k} F^{i_j+n}(\widetilde U)+v_j$ is greater than
  $k(k+1)R$ and thus, for some $j_0\le k$, the diameter of $
  F^{i_{j_0}+n}(\widetilde U)+v_{j_0}$ is larger than $(k+1)R$.

  But this implies that $ F^{i_{j_0}+n}(\widetilde U)+v_{j_0}$
  intersect at least $k+2$ integer translates of $D'$, and so $
  F^{i_{j_0}+n}(\widetilde U)+v_{j_0}$ must intersect at least $k+1$
  integer translates of $\partial D'$.  Let $w_l,\, 1\le l \le k+1$ be
  vectors in $\Z^2$ such that $ F^{i_{j_0}+n}(\widetilde U)+v_{j_0}$
  intersects $\partial D'+w_l$ for each $l$. As each copy $\partial
  D'+w_l$ is covered by the $k$ sets $F^{i_j}(\widetilde U)+v_j+w_l$,
  there exists $w_{l_1}\not=w_{l_2}$ and some $j_1 \le k$ such that
  $F^{i_{j_0}+n}(\widetilde U)+v_{j_0}$ intersects both
  $F^{i_{j_1}}(\widetilde U)+v_{j_1}+ w_{l_1}$ and $
  F^{i_{j_1}}(\widetilde U)+v_{j_1}+ w_{l_2}$.

  This implies that $F^{i_{j_0}+n- i_{j_1}}(\widetilde
  U)+v_{j_0}-v_{j_1}$ intersects both $\widetilde U +w_{l_1}$ and
  $\widetilde U+w_{l_2}$. Therefore, the connected components of
  $\pi^{-1}\left(f^{i_{j_0}+n- i_{j_1}}(U) \cup U\right)$ do not
  project injectively on $\T^2$ and so $f^{i_{j_0}+n- i_{j_1}}(U) \cup
  U$ is an essential set.
\end{proof}

\section{Proof of Theorem~\ref{t.main}}
\label{Homotopic_to_Id}

We start with the ``only if''-direction. We identify the fundamental
group $\Pi_1(\T^d)$ of the $d$-dimensional torus with $\Z^d$ and
denote the action of a continuous map $\psi:\T^d\to\T^k$ on the
fundamental groups by $\psi_*:\Pi_1(\T^d)\to\Pi_1(\T^k)$. In the
following, if $h:\torus\to\kreis$ semiconjugates a torus homeomorphism
$f$ to $R_\alpha$ and $H$ and $F$ are lifts, then we implicitly assume
these to be chosen such that $H$ semiconjugates $F$ to $x\mapsto
x+\alpha$ on $\R$. Given $v=(a,b)$ in $\R^2$, denote by $v^{\perp}=(b,-a)$.
\begin{lem}
  Suppose a homeomorphism $f$ of the two-torus is semiconjugate to an
  irrational rotation $R_\alpha$ on $\kreis$ via the factor map
  $h:\torus\to\kreis$. Then $h_*:\Pi_1(\T^2)\to\Pi_1(\kreis)$ is given
  by $u\to k\langle u,v\rangle$ for some reduced integer vector $v$
  and a positive integer $k$. Moreover, we have
  $f_*(v^\perp)\in\{v^\perp,-v^\perp\}$,
  $\rho_{v}(F)=\{\alpha/k\|v\|^2\}$ and $f$ has bounded deviations in
  the direction of $v$. In addition, if $w$ is complementary to
  $v^\perp$, then $f_*(w)=w+mv^\perp$ for some $m\in\Z$.
\end{lem}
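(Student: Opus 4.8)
\medskip
\noindent The plan is to extract all the assertions from two ingredients: the homomorphism $h_*:\Pi_1(\T^2)\to\Pi_1(\kreis)$ induced by $h$, and the relation $H\circ F=H+\alpha$ between the normalised lifts. First I would pin down $h_*$. As a homomorphism $\Z^2\to\Z$ it has the form $u\mapsto\langle u,\lambda\rangle$ for a unique $\lambda\in\Z^2$, and $\lambda\neq 0$: otherwise a lift $H:\R^2\to\R$ of $h$ would be $\Z^2$-periodic, hence bounded, contradicting $H(F^n(z))=H(z)+n\alpha$ with $\alpha$ irrational. Taking $k$ to be the greatest common divisor of the entries of $\lambda$ and $v=\lambda/k$, we get $v$ reduced, $k\geq 1$, and $h_*(u)=k\langle u,v\rangle$.

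Next, the algebraic statements. Passing to fundamental groups in $h\circ f=R_\alpha\circ h$ and using that $R_\alpha$ is homotopic to the identity (so $(R_\alpha)_*=\mathrm{Id}$), we obtain $h_*\circ f_*=h_*$, that is, $\langle f_*(u),v\rangle=\langle u,v\rangle$ for all $u\in\Z^2$ (divide by $k$); equivalently, $v$ is fixed by the transpose $f_*^{\top}$. Putting $u=v^\perp$ gives $\langle f_*(v^\perp),v\rangle=\langle v^\perp,v\rangle=0$, so $f_*(v^\perp)$ is an integer vector orthogonal to $v$; since $v$ is reduced, these form the rank-one lattice $\Z v^\perp$, and as $f_*\in\mathrm{GL}(2,\Z)$ is invertible we get $f_*(v^\perp)\in\{v^\perp,-v^\perp\}$. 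If $w$ is complementary to $v^\perp$, then $\{w,v^\perp\}$ is a $\Z$-basis of $\Z^2$, so $f_*(w)=aw+mv^\perp$ with $a,m\in\Z$; pairing with $v$ and using $\langle v^\perp,v\rangle=0$ yields $a\langle w,v\rangle=\langle w,v\rangle$, while $\langle w,v\rangle\neq 0$ (otherwise $w\in\Z v^\perp$, contradicting $\det(w,v^\perp)=1$), so $a=1$ and $f_*(w)=w+mv^\perp$.

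For the rotational statements I would isolate the periodic part of $H$. Set $\psi(z)=H(z)-k\langle z,v\rangle$; from $H(z+u)=H(z)+k\langle u,v\rangle$ one checks that $\psi(z+u)=\psi(z)$ for all $u\in\Z^2$, so $\psi$ descends to a continuous, hence bounded, function on $\T^2$, say $|\psi|\leq C_0$. Together with $H(F^n(z))=H(z)+n\alpha$, this gives $|k\langle F^n(z)-z,v\rangle-n\alpha|\leq 2C_0$ for all $n\in\N$ and $z\in\R^2$. (Note also that $\langle F^n(z)-z,v\rangle$ depends only on $\pi(z)$, since $(f_*^{\top})^n v=v$, so $\rho_v(F)$ is meaningful even when $f$ is not homotopic to the identity.) This single estimate simultaneously forces $\rho_v(F)$ to be the asserted singleton and shows that $f$ has bounded deviations in the direction of $v$.

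I do not expect a genuine obstacle here — each step is a short covering-space or $\mathrm{GL}(2,\Z)$ computation. The two points that need a little care are (a) checking that $\rho_v(F)$ is well-defined without assuming $f$ homotopic to the identity, which is exactly where $f_*^{\top}v=v$ enters, and (b) keeping the normalisations straight so that the constant appearing in $\rho_v(F)$ comes out as stated.
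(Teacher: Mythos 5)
Your argument is sound and follows the same basic strategy as the paper's proof --- ruling out $h_*=0$ via boundedness of a lift, reading $v$ and $k$ off the kernel of $h_*$, and obtaining the deviation bound from the boundedness of $H$ minus its linear part --- but your execution differs in two places. The paper first conjugates by $\phi_A$ with $A=(w,v^\perp)$ and passes to a $k$-fold cover of the circle so that $\tilde h$ becomes homotopic to $\pi_1$, then extracts \emph{both} the deviation estimate and the homotopy action from the single bound $\sup_z|\tilde H(z)-\pi_1(z)|<\infty$ (for instance, $\pi_1(\tilde f_*(0,1))=0$ is deduced from $|\pi_1(\tilde F(0,0)-\tilde F(0,l))|=l\,|\pi_1(\tilde f_*(0,1))|\le 4C$), and finally transports everything back through $\Phi_A$. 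You instead get the homotopy statements purely algebraically from $h_*\circ f_*=(R_\alpha)_*\circ h_*=h_*$, and your periodic function $\psi=H-k\langle\cdot\,,v\rangle$ is the coordinate-free version of $\tilde H-k\pi_1$; this avoids the coordinate change and the covering trick. One small step to make explicit: $\langle f_*(v^\perp),v\rangle=0$ only gives $f_*(v^\perp)=cv^\perp$ with $c\in\Z$; to get $c=\pm1$ you need one more word, e.g.\ that $f_*^{-1}$ also satisfies $h_*\circ f_*^{-1}=h_*$ and hence preserves $\Z v^\perp$, or that $f_*$ sends the primitive vector $v^\perp$ to a primitive vector (this is the paper's ``$f_*$ permutes reduced integer vectors'').

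The one substantive issue is the constant in $\rho_v(F)$ --- exactly the normalisation you flagged as needing care, and it does not come out as you claim. Your estimate $|k\langle F^n(z)-z,v\rangle-n\alpha|\le 2C_0$ yields, with the definition of $K(F,m)$ given in the introduction (the \emph{unnormalised} quantity $\langle F^m(z)-z,v\rangle/m$), the value $\rho_v(F)=\{\alpha/k\}$, not the asserted $\{\alpha/(k\|v\|^2)\}$; these differ whenever $\|v\|\neq 1$. Test case: $f=R_{(\alpha,0)}$ and $h(x,y)=x+y\bmod 1$, so $v=(1,1)$, $k=1$; then $\langle F^n(z)-z,v\rangle=n\alpha$ and $\rho_v(F)=\{\alpha\}$, whereas $\alpha/(k\|v\|^2)=\alpha/2$. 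The factor $\|v\|^2$ in the statement comes from the last step of the paper's proof, which computes the rotation number ``orthogonal to $u=v^\perp$'' as $\alpha/k\cdot\langle w,u^\perp\rangle/\|u\|^2$ following the normalised-projection convention of the cited Proposition~2.1 of \cite{jaeger:2009b}; that convention divides the $v$-component of the displacement by $\|v\|^2$ and is not the one written in the present paper's definition of $\rho_v$. So your computation is the correct one for the definition as stated, but you cannot simply assert that it produces ``the asserted singleton'': you must either adopt the normalised convention (and divide your estimate by $\|v\|^2$) or note that the stated constant presupposes it. Since the converse direction of Theorem~\ref{t.main} immediately reduces to $v=(1,0)$, where the two conventions coincide, nothing downstream is affected, but a self-contained proof of this lemma has to resolve the mismatch explicitly.
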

\proof First, suppose for a contradiction that $h_*=0$ and let
$H:\R^2\to \R$ be a lift of $h$. Then $\sup_{z\in\R^2}|H(z)|
<\infty$. However, since $H(F^n(z)) = H(z)+n\alpha$, this contradicts
the fact that $\alpha\neq 0$.  Hence, the kernel of $h_*$ is
homomorphic to $\Z$ and generated by some reduced integer vector
$\tilde v$, so if we let $v=\tilde v^\perp$ we have $h_*(u)=k\langle
u,v\rangle$ for some $k\in\Z\smin\{0\}$. By replacing $v$ with $-v$ if
necessary, we may assume $k>0$. Let $w$ be a reduced integer vector
complementary to $v^\perp$ and $A=(w,v^\perp)\in\textrm{SL}(2,\Z)$.
Then $A$ induces a linear torus homeomorphism $\phi_A$ with lift
$\Phi_A:z\mapsto Az$, and $\tilde h=h\circ\phi_A$ semiconjugates
$\tilde f=\phi_A^{-1}\circ f\circ\phi_A$ to $R_\alpha$.  Since $\tilde
h_*=h_*\circ A$ and $A$ sends $(1,0)$ to $w$ and $(0,1)$ to $v^\perp$,
$\tilde h_*$ sends $(1,0)$ to $k$ and $(0,1)$ to zero. This implies
that $\tilde h$ is homotopic to the map $z\mapsto k\pi_1(z) \bmod 1$.
By replacing the circle by a $k$-fold covering and $R_\alpha$ by
$R_{\alpha/k}$, we may assume that $\tilde h$ is homotopic to $\pi_1$
and semiconjugates $\tilde f$ to $R_{\alpha/k}$.

Let $\tilde F$ and $\tilde H$ be lifts of $\tilde f,\tilde h$, again
chosen such that $\tilde H$ semiconjugates $\tilde F$ to the
translation by $\alpha/k$ on $\R$. The fact that $\tilde h$ is
homotopic to $\pi_1$ implies that $C=\sup_{z\in \R^2} |\tilde
H(z)-\pi_1(z)|<\infty$. This further yields
\begin{eqnarray}
  \lefteqn{|\pi_1(\tilde F^n(z)-z)-n\alpha/k| \  = }\nonumber\\
  & = &  |\pi_1(\tilde F^n(z))-\tilde H(\tilde F^n(z))+
   \label{e.bounded_deviations_from_sc}
  \underbrace{\tilde H(\tilde F^n(z))-\tilde H(z)-n\alpha/k}_{=0}
   + \tilde H(z)-\pi_1(z)| \\
  & \leq &  |\pi_1(\tilde F^n(z))-\tilde H(\tilde F^n(z))| 
  + |\tilde H(z)-\pi_1(z)| \ \leq \ 2C \ . \nonumber
\end{eqnarray}
Applied to $n=1$, this means in particular that $|\pi_1(\tilde
F(0,0)-\tilde F(0,l))| = l|\pi_1(\tilde f_*(0,1))|\leq 4C$ for all
$l\in\N$, so that $\pi_1(f_*(0,1))=0$.  Hence, $(0,1)$ is an
eigenvector for the action of $\tilde f_*$. Since $\tilde f$ is a
homeomorphisms and thus $\tilde f_*$ has to permute reduced integer
vectors, it must send $(0,1)$ either to itself or to $(0,-1)$. Thus
$f_*(v^\perp)\in \{v^\perp,-v^\perp)$. Similarly, we have
$|\pi_1(\tilde F(l,0)-l-\tilde F(0,0)| \leq 4C$ for all $l\in\N$,
which implies that $\pi_1(\tilde f_*(1,0))=1$, so $\tilde
f_*(1,0)=(1,m)$ for some $m\in\Z$. Therefore $f_*(w)=w+mv^\perp$.

Finally, (\ref{e.bounded_deviations_from_sc}) directly implies that
$\tilde f$ has rotation number $\alpha/k$ and bounded deviations in
the direction of $(1,0)$. Going back to the original coordinates, we
obtain that $F=\Phi_A\circ F\circ \Phi_A^{-1}$ has a well-defined
speed of rotation and bounded deviations orthogonal to $A(0,1)^t=u^t$,
and the rotation number in this direction is
$\alpha/k\cdot\frac{\langle w,u^\perp\rangle}{\|u\|^2} =
\alpha/k\|u\|^2$ (see also \cite[Proof of
Proposition~2.1]{jaeger:2009b}). This proves the statement.
\qed\medskip

By choosing $v^\perp$ and a complementary vector $w$ as a basis of
$\Z^2$, we can thus determine the possible homotopy types of $f$.
\begin{cor}\label{c.homotopy_types}
  If a torus homeomorphisms $f$ is semiconjugate to an irrational
  rotation of the circle, then the action $f_*:\Z^2\to\Z^2$ is
  conjugate to a linear transformation given by a matrix of the form
  $\twomatrix{\pm 1}{m}{0}{1}$ with $m\in\Z$.
\end{cor}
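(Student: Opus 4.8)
The plan is to read the corollary off from the preceding lemma after a single change of basis. First I would observe that, since $v=(p,q)$ is reduced, the vector $v^\perp=(q,-p)$ is again reduced, hence primitive in $\Z^2$; therefore it can be completed to a $\Z$-basis, i.e.\ there exists $w\in\Z^2$ complementary to $v^\perp$ (so $\det(w,v^\perp)=1$), and the ordered pair $(v^\perp,w)$ is then a basis of $\Z^2$.

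Next I would apply the lemma to this $v$ and $w$. It yields $f_*(v^\perp)\in\{v^\perp,-v^\perp\}$ and $f_*(w)=w+mv^\perp$ for some $m\in\Z$. Expressing the automorphism $f_*$ in the ordered basis $(v^\perp,w)$, its first column is thus $(\pm 1,0)^t$ and its second column is $(m,1)^t$; that is, in this basis $f_*$ is represented by $\twomatrix{\pm 1}{m}{0}{1}$. Equivalently, if $A\in\mathrm{GL}(2,\Z)$ denotes the matrix whose columns are $v^\perp$ and $w$, then the matrix of $f_*$ in the standard basis of $\Z^2$ equals $A\,\twomatrix{\pm 1}{m}{0}{1}\,A^{-1}$, which is exactly the claimed conjugacy.

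There is essentially no obstacle here, since all of the content already resides in the lemma; what remains is only bookkeeping. The points that require a little care are: that $v^\perp$ is primitive, so that completing it to a basis of $\Z^2$ is legitimate; the ordering of the basis, since listing $v^\perp$ before $w$ is precisely what makes the normal form upper triangular (the opposite order produces the transpose); and the fact that the conjugacy must in general be allowed over $\mathrm{GL}(2,\Z)$ rather than $\mathrm{SL}(2,\Z)$, because the case $f_*(v^\perp)=-v^\perp$ genuinely occurs (for instance $f(x,y)=(x+\alpha,-y)$ is area-preserving and semiconjugate to $R_\alpha$). One may also note that the particular choice of complementary $w$ is immaterial: any two such vectors differ by an integer multiple of $v^\perp$, which only changes the value of $m$.
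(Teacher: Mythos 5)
Your argument is correct and coincides with the paper's own (essentially one-line) proof: choose a complementary $w$ so that $(v^\perp,w)$ is a $\Z$-basis, and read the matrix of $f_*$ in that basis directly from the conclusions $f_*(v^\perp)=\pm v^\perp$ and $f_*(w)=w+mv^\perp$ of the preceding lemma. The additional remarks on primitivity, basis ordering, and the necessity of the $-1$ case are accurate but not needed beyond what the paper already records.
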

\medskip

In order to prove the converse direction, we may assume by means of a
linear coordinate change that $v=(1,0)$.  If we let
$\A=\R\times\kreis$ and denote by $\wh F:\A\to\A$ the respective lift
of $f$ and by $\pi_1:\A\to\R$ the projection onto the first coordinate, then due to the bounded deviations in the $v$-direction we
have
\begin{equation}
  \label{eq:1}
  \left|\pi_1\left(\wh F^n(z)-z\right)-n\alpha\right| \ \leq \ C
\end{equation}
for some constant $C\geq 0$.  Suppose that some iterate $f^p$ of $f$
has a well-defined rotation number and bounded deviations also
orthogonal to $v$.  Since we exclude the eventually annular case, this
means that $\rho_{v^\perp}(F^p) =\{\beta\}$ for some $\beta\in\R$ which
is not rationally related to $\alpha$ \cite[Proposition
3.9]{jaeger:2009b}.  It is easy to see that this can neither happen if
$f$ is orientation-reversing nor if it is homotopic to a Dehn-twist.
Thus, in the light of Corollary~\ref{c.homotopy_types}, $f$ has to be
homotopic to the identity.  In this case, the existence of a
semiconjugacy to the two-dimensional rotation by $\rho=(\alpha,\beta)$
on $\torus$ follows directly from \cite[Theorem C]{jaeger:2009b}.

Hence, it remains to treat the case no iterate of $f$ is a
pseudo-rotation with bounded deviations. As we have mentioned, the
argument which we will apply here only requires $f$ to be
nonwandering. Note that if $h$ semiconjugates $f$ to $R_{\alpha/k}$
and $\tau(x)=kx\bmod 1$, then $\tau\circ h$ semiconjugates $f$ to
$R_\alpha$. Hence, in order to complete the proof of
Theorem~\ref{t.main}, it suffices to show the following statement.

\begin{thm} \label{t.complementary_case} Suppose $f$ is a nonwandering
  non-annular torus homeomorphism and $\rho(F)\ssq \{\alpha\}\times\R$
  for some $\alpha\in\R\smin\Q$.  Moreover, assume that $f$ has
  bounded deviations in the direction of $(1,0)$, but no iterate of
  $f$ is a pseudo-rotation with uniformly bounded deviations. Then $f$
  is semiconjugate to the irrational rotation $R_\alpha$ on $\kreis$.
\end{thm}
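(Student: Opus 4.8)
The plan is to realise $h$ as the ``circular coordinate'' of a dynamically defined, monotone family of essential circloids in an annular cover of $\torus$, in the spirit of the Poincar\'e classification theorem and of \cite{jaeger:2009b,KoropeckiTal2012StrictlyToral}. As a set-up: since $\rho(F)$ appears in the statement, $f$ is homotopic to the identity, and the hypothesis gives $|\pi_1(F^n(z)-z)-n\alpha|\le C$ for all $n,z$. Because $\alpha$ is irrational, $f$ has no periodic point, so $\mathrm{Fix}(f^n)=\emptyset$ for every $n$; with non-annularity this makes $f$ strictly toral, and $\mathrm{Ine}(f)=\emptyset$ by the preliminary lemma, so every point of $\torus$ is essential and every essential periodic open set is fully essential. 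We pass to the annular cover $\A=\R\times\kreis=\R^2/\langle(0,1)\rangle$ with deck transformation $T\colon w\mapsto w+(1,0)$; the induced lift $\widehat F\colon\A\to\A$ of $f$ preserves the two ends of $\A$ and satisfies $|\widehat\pi_1(\widehat F^n w)-\widehat\pi_1(w)-n\alpha|\le C$, where $\widehat\pi_1\colon\A\to\R$ is the first coordinate, so $\widehat F$ is a coarse rotation by $\alpha$ in the $\widehat\pi_1$-direction.

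For $s\in\R$ put $\tilde B_s=\{\,w\in\A:\liminf_{n\to\infty}(\widehat\pi_1(\widehat F^n w)-n\alpha)\ge s\,\}$ and $B_s=\overline{\tilde B_s}$. The deviation bound yields $\{\widehat\pi_1\ge s+C\}\ssq\tilde B_s\ssq\{\widehat\pi_1\ge s-C\}$, so each $B_s$ is closed, essential, and ``pinned'' in a strip of width $\le 2C$ about $\widehat\pi_1=s$; re-indexing the sequence $\widehat\pi_1(\widehat F^n w)-n\alpha$ shows $B_{s'}\ssq B_s$ for $s\le s'$, $T(B_s)=B_{s+1}$, and $\widehat F(B_s)=B_{s+\alpha}$. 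Let $C_s$ be the essential circloid bounding $B_s$ from the left, obtained from Lemma~\ref{l.bounding_circloids} applied to a compact essential truncation of $B_s$. Since $\widehat F$ and $T$ preserve the ends of $\A$ they commute with this bounding operation, so $(C_s)_{s\in\R}$ is monotone ($C_s\preccurlyeq C_{s'}$ for $s\le s'$), equivariant ($T(C_s)=C_{s+1}$ and $\widehat F(C_s)=C_{s+\alpha}$), with each $C_s$ inside a strip about $\widehat\pi_1=s$; in particular it is non-constant and $\bigcap_s(\cU^+(C_s)\cup C_s)=\emptyset=\bigcap_s(\cU^-(C_s)\cup C_s)$.

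One then puts $\widehat h(w)=\inf\{\,s:w\notin\cU^+(C_s)\,\}$: this is finite, satisfies $\widehat h(Tw)=\widehat h(w)+1$ and $\widehat h(\widehat F w)=\widehat h(w)+\alpha$, and --- \emph{provided $s\mapsto C_s$ has no plateau} --- is continuous, hence descends to a continuous surjection $h\colon\torus\to\kreis$ with $h\circ f=R_\alpha\circ h$. Plateaus are formally easy to exclude: the set of interior points of plateaus is open and, by the equivariance of the family, invariant under translation by $1$ and by $\alpha$, hence under a dense subgroup of $(\R,+)$; being proper it is empty. The real content lies one step earlier --- in knowing that the sets $B_s$ really do admit well-defined left-bounding circloids which \emph{nest strictly} and \emph{spread over all of} $\A$, rather than collapsing onto a single invariant circloid or leaving an invariant band uncrossed. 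This is exactly where the hypotheses enter: because $f$ is strictly toral, nonwandering, and no iterate of $f$ is a pseudo-rotation with uniformly bounded deviations, Proposition~\ref{p.essential_annulus} applies, and the fact that a neighbourhood of any (essential) point becomes essential under some iterate keeps the orbit of a small disk from staying trapped between two consecutive circloids; this rules out an invariant ``blocking'' annulus or circloid, whose existence would conversely make $f$ restrict to an invariant fully essential continuum with bounded deviations in the $(0,1)$-direction as well, forcing an iterate of $f$ to be a pseudo-rotation with uniformly bounded deviations, contrary to hypothesis.

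The main obstacle is thus precisely this non-degeneracy of the dynamically defined circloid family --- that it carries a non-trivial circular order filling all of $\A$ --- which fails for eventually annular homeomorphisms (this being the content of the counterexample in Section~\ref{Counterexample}) and which in the present situation is supplied by Proposition~\ref{p.essential_annulus} together with strict torality. The construction of the family, the verification of its equivariance, and the check that $h$ is the desired semiconjugacy are, by comparison, routine.
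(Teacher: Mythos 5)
Your overall architecture coincides with the paper's: a $T$- and $\wh F$-equivariant, monotone family of essential circloids $(C_s)_{s\in\R}$ pinned near $\wh\pi_1=s$, with $h$ read off as the parameter value, and Proposition~\ref{p.essential_annulus} as the key external input. (Your $B_s$ built from the $\liminf$ of forward deviations is a harmless variant of the paper's $A_r=\bigcup_{n\in\Z}\wh F^n(\{r-n\alpha\}\times\kreis)$, and your dense-subgroup argument against plateaus is a valid substitute for the paper's choice of $t=r+p\alpha-q$.) The gap is at the continuity step. Continuity of $\wh h(w)=\inf\{s: w\notin\cU^+(C_s)\}$ does \emph{not} follow from the absence of plateaus: what is needed is that the circloids are pairwise \emph{disjoint}, not merely pairwise distinct. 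Circloids may have nonempty interior, and two distinct members of a monotone family can overlap; if $z_0\in C_a\cap \partial\cU^+(C_b)$ with $a<b$, then $\wh h(z_0)\le a$ while $z_0$ is a limit of points of $\cU^+(C_b)$, on which $\wh h\ge b$, so $\wh h$ jumps. Distinctness alone therefore does not close the argument, and disjointness is exactly the statement whose proof consumes the second half of the paper's proof.

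Moreover, the way you invoke Proposition~\ref{p.essential_annulus} is not the actual mechanism. The claim that a ``blocking'' invariant annulus between two circloids would force bounded deviations in the $(0,1)$-direction does not follow: the circloids are transverse to $(1,0)$, an invariant member of the family is already excluded by the plateau argument, and none of this controls vertical deviations. The correct use is this: given $r'<s'$, take intermediate $r<s$ with $C_r\neq C_s$, use Lemma~\ref{l.space_between_circloids} to find an open disk $\wh U\ssq(C_r,C_s)$, and apply Proposition~\ref{p.essential_annulus} to obtain $n$ with $U\cup f^n(U)$ essential. Some translate $\wh V=T^{-m}\wh F^n(\wh U)$ then meets $\wh U$ while lying in $(C_{r+n\alpha-m},C_{s+n\alpha-m})$, which forces $|n\alpha-m|<(s'-r')/3$; hence $\wh U\cup\wh V$ is an essential open set contained in $(C_{r'},C_{s'})$, separating $C_{r'}$ from $C_{s'}$ and proving them disjoint. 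You need to supply this (or an equivalent) step; as written, the semiconjugacy you construct need not be continuous.
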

\proof Due to (\ref{eq:1}), the sets $A_r=\bigcup_{n\in\Z} \wh
F^n(\{r-n\alpha\}\times \kreis)$ are compact, and it is easy to check
that they satisfy $\wh F(A_r)=A_{r+\alpha}$ and $A_{r+1} = T(A_r)$, where
$T:\A\to\A,\ (x,y)\mapsto(x+1,y)$. Moreover, these relations carry
over to the circloids $C_r=\cC^+(A_r)$. In addition, the monotonicity
of the construction implies $C_r\preccurlyeq C_s$ whenever $r\leq s$.
We now define 
\[
H : \A \to \R \quad , \quad   z\mapsto \sup\{r\in\R \mid z\in \cU^+(C_r)\} \ .
\] 
Then $H\circ F(z) = \sup\{r\in\R \mid F(z)\in \cU^+(C_r)\} \ = \
\sup\{r\in\R \mid z\in \cU^+(C_{r-\alpha})\} \ = \ H(z)+\alpha$. In
the same way, one can see that $H\circ T = T\circ H$. Hence, $H$
projects to a map $h:\torus\to\kreis$ that satisfies $h\circ
f=R_\alpha\circ h$. If $h$ is continuous, then it follows immediately
from the minimality of $R_\alpha$ that $h$ is also onto. Thus, it only
remains to check the continuity of $h$.

In order to do so, however, it suffices to prove that the circloids
$C_r\ssq \A$ with $r\in\R$ are pairwise disjoint. This fact is shown
on \cite[page 615]{jaeger:2009b} ({\em Construction of the
  semi-conjugacy}), and we refrain from repeating the argument here.
Thus, in order to complete the proof, it remains to show the
disjointness of the circloids.

To that end, let $r'<s'$ and suppose without loss of generality that
$s'-r'\leq1$. Let $r=(2r'+s')/3$ and $s=(r'+2s')/3$. If $C_r=C_s$, then $C_t=C_r$
for all $t\in(r,s)$, and we may choose $t$ of the form $t=r+p\alpha-q$ for
suitable integers $p,q$. Then $C_r=C_{r+p\alpha-q}=T^{-q}\circ \wh F^p(C_r)$,
which implies that $\rho_v(F)=\{p/q\}$, contradicting the irrationality of
$\alpha$.

Hence, we have $C_r\neq C_s$, and due to
Lemma~\ref{l.space_between_circloids} we can find an open disk $\wh
U\ssq (C_r,C_s)$. Let $U=\pi(\wh U)$.  According to
Proposition~\ref{p.essential_annulus}, there exists an integer
$n\in\N$ such that $f^n(U)\cup U$ is essential. This implies in
particular that $f^n(U)\cap U\neq \emptyset$, so that we can choose
an integer $m$ such that $\wh V=T^{-m}\circ \wh F^n(\wh U)$ intersects $\wh
U$.

Since $\wh U\ssq (C_r,C_s)$ and $\wh V\ssq
(C_{r+n\alpha-m},C_{s+n\alpha-m})$, this is only possible if
$|n\alpha-m|<(s'-r')/3$. This yields that $\wh U\cup \wh V\ssq
(C_{r'},C_{s'})$, and as a consequence $\wh V$ cannot intersect any
translate of $\wh U$. Since $U\cup f^n(U)$ is essential, this means
$\wh U\cup \wh V$ is essential as well. We thus obtain an essential
open set between $C_{r'}$ and $C_{s'}$ which separates the two
circloids, so that these must be disjoint. Since $r'<s'$ were
arbitrary, this completes the proof.  \qed\medskip

\section{A counterexample in the annular case}\label{Counterexample}

In this section we sketch the construction of an area-preserving
transitive annular irrational pseudo-rotation of the $2$-torus with
uniformly bounded deviations that is not semi-conjugated to any
irrational rotation.

To begin, Besicovitch in \cite{Besicovitch1951Example}( see also
\cite{Schnirelman30Example}) has shown the existence of a transitive
homeomorphism $g:\kreis\times \R$, which is a skew-product over an
irrational rotation, that is, $g$ is of the form $g(x,y)= (x+\alpha,
y+\phi(x))$ with $\alpha$ irrational.  We claim that the only possible
semi-conjugacies between $g$ and an irrational rotation are the
trivial ones, given by the projection onto the first coordinate
composed with a uniform rotation. Indeed, if $(x_0, y_0)$ is a point
with dense forward orbit, then for any $(x, y)$ there exists a
sequence $(n_k)_{k\in\N}$ such that $g^{n_k}(x_0, y_0)$ converges to
$(x,y)$ and, in particular, $n_k\alpha$ converges to $x - x_0$.  If
$h:\kreis\times \R \to \kreis$ is the map semiconjugating $g$ to a
rotation $R_{\alpha}$, then $h(x_0, y_0)- (x-x_0)= \lim_{k\to\infty}
R_{\alpha}^{n_k} h(x_0) = \lim_{k\to\infty}h (g^{n_k}(x_0, y_0))=
h(x,y)$ and so $h(x,y)-h(x_0, y_0) = x-x_0$.

Now, let $\psi:\kreis\times\R\to \kreis\times (-1,1)$ be defined as
$$\psi(x, y)= (x + \sin( \log(\vert y \vert +1)), \frac{y}{\vert y \vert+1}).$$
One can easily verify that, if $f$ is the homeomorphism of
$\kreis\times (-1,1)$ given by $f = \psi\circ g\circ \psi^{-1}$, then
$f$ can be extended to a homeomorphism of the closed annulus
$\A=\kreis\times [-1,1]$ by defining $f(x, 1)= (x+\alpha, 1)$ and
$f(x, -1)= (x+\alpha, -1)$. It is also immediate that $f$ has
uniformly bounded deviations from the rigid rotation by $\alpha$, and
that the $\psi$ image of any fiber $\{x\}\times \R$ accumulates on all
boundary points of $\A$.

Now, if by contradiction there exists $h$ that semi-conjugates $f$ to
the rotation $R_{\alpha}$, then $h\circ\psi$ semi-conjugates $g$ to
$R_{\alpha}$. This implies that, for each $\theta \in \kreis$,
$h^{-1}(\theta)$ must contain $\psi(x, \R)$ for some $x$, and by
continuity $h^{-1}(\theta)$ must contain both boundaries of $\A$,
which is impossible since the image of a fiber of the semiconjugation
must be disjoint from its iterates.

\footnotesize

%\bibliography{dynamics,heisenberg,irf} \bibliographystyle{unsrt}

\begin{thebibliography}{10}

\bibitem{poincare:1885}
H.~Poincar{\'e}.
\newblock M{\'e}moire sur les courbes d{\'e}finies par une {\'e}quation
  diff{\'e}rentielle.
\newblock {\em J. Math. Pure. Appl., S{\'e}rie IV}, 1:167--244, 1885.

\bibitem{Denjoy1932Courbes}
A.~Denjoy.
\newblock Sur les courbes d{\'e}finies par les {\'e}quations
  diff{\'e}rentielles {\`a} la surface du tore.
\newblock {\em J.~Math.~Pures~Appl.}, 11:333--376, 1932.

\bibitem{jaeger/stark:2006}
T.~J{\"a}ger and J.~Stark.
\newblock Towards a classification for quasiperiodically forced circle
  homeomorphisms.
\newblock {\em J.\ Lond.\ Math.\ Soc.}, 73(3):727--744, 2006.

\bibitem{Aliste2010QuasicrystalTranslations}
J.~Aliste-Prieto.
\newblock Translation numbers for a class of maps on the dynamical systems
  arising from quasicrystals in the real line.
\newblock {\em Ergodic Theory Dyn. Syst.}, 30(02):565--594, 2010.

\bibitem{AlisteJaeger2012AlmostPeriodicStructures}
J.~Aliste-Prieto and T. J\"ager.
\newblock Almost periodic structures and the semiconjugacy problem.
\newblock {\em J. Differ. Equations}, 252(9):4988--5001, 2012.

\bibitem{jaeger:2009b}
T.~J{\"a}ger.
\newblock Linearisation of conservative toral homeomorphisms.
\newblock {\em Invent. Math.}, 176(3):601--616, 2009.

\bibitem{JaegerKoropecki2014DecomposableContinua}
T.~J{\"a}ger and A.~Koropecki.
\newblock Poincar\'e theory for decomposable cofrontiers.
\newblock E-print  arXiv:1506.01096.

\bibitem{franks/lecalvez:2003}
J.~Franks and P.~Le~Calvez.
\newblock Regions of instability for non-twist maps.
\newblock {\em Ergodic Theory Dyn. Syst.}, 23(1):111--141, 2003.

\bibitem{beguin/crovisier/leroux:2004}
F.~B{\'e}guin, S.~Crovisier, and F.~Le~Roux.
\newblock Pseudo-rotations of the closed annulus: variation on a theorem of {J.
  Kwapisz}.
\newblock {\em Nonlinearity}, 17(4):1427--1453, 2004.

\bibitem{jaeger:2010a}
T.~J{\"a}ger.
\newblock Periodic point free homeomorphisms of the open annulus -- from skew
  products to non-fibred maps.
\newblock {\em Proc. Am. Math. Soc.}, 138:1751--1764, 2010.

\bibitem{misiurewicz/ziemian:1989}
M.~Misiurewicz and K.~Ziemian.
\newblock Rotation sets for maps of tori.
\newblock {\em J. Lond. Math. Soc.}, 40:490--506, 1989.

\bibitem{KoropeckiTal2012StrictlyToral}
Andres Koropecki and Fabio~Armando Tal.
\newblock Strictly toral dynamics.
\newblock {\em Invent. Math.}, 196(2):339--381, 2014.

\bibitem{jaeger:2009c}
T.~J{\"a}ger.
\newblock The concept of bounded mean motion for toral homeomorphisms.
\newblock {\em Dyn. Syst.}, 24(3):277--297, 2009.

\bibitem{GuelmanKoropeckiTal2014transitivity}
N.~Guelman, A.~Koropecki, and F.~Armando~Tal.
\newblock Rotation sets with non-empty interior and transitivity in the
  universal covering.
\newblock {\em Ergodic Theory Dyn. Syst.}, 35:883-894, 2015.

\bibitem{Besicovitch1951Example}
A.~S. Besicovitch.
\newblock A problem on topological transformations of the plane. {II}.
\newblock {\em Proc. Cambridge Philos. Soc.}, 47:38--45, 1951.

\bibitem{Schnirelman30Example}
L.~G. Shnirelman.
\newblock An example of a transformation of the plane.
\newblock {\em Proc. Don Polytechnic Inst. (Novochekassk)}, 14 (Science
  section, Fis-math. part), 1930.

\end{thebibliography}

\end{document}